\newtheorem{theorem}{Theorem}[section]
\newtheorem{lemma}[theorem]{Lemma}
\newtheorem{corollary}[theorem]{Corollary}
\newtheorem{proposition}[theorem]{Proposition}
\newtheorem{conjecture}[theorem]{Conjecture}
\newtheorem{problem}[theorem]{Problem}
\def\C{\mathbb{C}}
\def\Q{\mathbb{Q}}
\def\R{\mathbb{R}}
\def\Z{\mathbb{Z}}
\numberwithin{equation}{section}
\title{Extendable orthogonal sets of integral vectors}
\author{Fernando Chamizo}
\thanks{Partially  supported  by  the PID2020-113350GB-I00  grant of the MICINN (Spain) and by ``Severo Ochoa Programme for Centres of Excellence in R{\&}D'' (SEV-2015-0554). This latter grant supported the visit of the second author to the ICMAT where this
work was completed.}
\address{Departamento de Matem\'aticas and ICMAT\\
Universidad Aut\'onoma de Ma\-drid\\
28049 Madrid, Spain}
\email{fernando.chamizo@uam.es}
\author{Jorge Jim\'enez Urroz}
\thanks{Partially supported by the PID2019-110224RB-I00 Grant of the MICINN (Spain).}
\address{Departamento de Matem\'aticas \\
Universitat Polit\`ecnica Catalunya\\
Barce\-lona, Spain}
\email{jorge.urroz@upc.edu}
\keywords{quaternions, sums of squares, orthogonality}
\subjclass{11D09, 11E25, 11D85}
\begin{document}

\begin{abstract}
Motivated by a model in quantum computation we study orthogonal sets of integral vectors of the same norm that can be extended with new vectors keeping the norm and the orthogonality. Our approach involves some arithmetic properties of the quaternions and other hypercomplex numbers.
\end{abstract}

\maketitle

%\reversemarginpar\marginpar{\vspace{-30pt}{\hfill{\small$\mathsf{\jobname}$}}}

\section{The quantum model and the mathematical problem}

In \cite{GaLa} it is introduced a model of discrete quantum computation that leads to a curious arithmetic problem related to the representation as a sum of squares.
The motivation in that paper is to find a discrete set of states as small as possible that is closed by the Hadamard gate $H$ (inducing entanglement) and by the phase shift gate $\varphi=\pi/2$ (commonly called $S$) with two control qubits. It turns out that this discrete set is the lattice generated by the canonical basis with Gaussian integer coefficients.
In mathematical terms this is related to the nearly tautological fact that the smallest ring containing~$1$ (as an integer) and $i=\sqrt{-1}$ is $\Z[i]$ because $H$ introduces the addition and the subtraction and $S$ the multiplication by~$i$.

Given an orthogonal set of these discrete states, it is always possible to complete it to an orthogonal basis of the vector space, in particular an observable can be constructed having them as eigenstates. The normalization ruins in some way the discreteness and we would like to have all the basis vectors sharing the same norm to clear denominators after the normalization.
In the context of quantum computation the underlying vector space is $\bigotimes_{k=1}^n \C^2$ with $n$ the number of qubits and then the dimension is always a power of two.  It is indeed doubled when we consider the lattice of the discrete set of states above over $\Z$ because $[\Z[i]:\Z]=2$. But we can pose the problem in any dimension from a mathematical point of view. Namely, we consider the following statement:
\smallskip

%\paragraph{\bf Problem.}
{\begin{problem}\label{problem}
{\sl For a given dimension $d$ decide whether every set of orthogonal vectors in $\Z^d$ with the same norm can be extended with new integral vectors of the same norm  to get an orthogonal basis of $\R^d$.}
\end{problem}}
\medskip

In \cite{LaGa} the problem is solved in the affirmative for $d=4$ for sets of vectors having as norm the square root of a prime number, meaning that any of these sets can be extended to complete a basis. It is also proved that for any $d>2$ with $4\nmid d$ there are sets such that the extension to an orthogonal basis is not possible. Note that the case $d=2$ is trivial because $(a,b)$ and $(b,-a)$ are orthogonal of the same norm.
\smallskip

To give some insight about the complexity of the situation, we mention some examples {for $d=3$}.
The vector $\vec{v}=(1,3,5)\in\Z^3$ has norm $\sqrt{35}$ and there does not exist any other vector in $\Z^3$ orthogonal to $\vec{v}$ with this norm.
On the other hand, if the starting vector is $\vec{v}=(2,3,6)\in\Z^3$, having norm~$7$, we can complete it to the orthogonal basis
$\big\{\vec{v}, (3,-6,2), (6,2,-3)\big\}$
of vectors of the same norm. An intermediate example is
$\vec{v}= (1, 4, 10)$ with~$\|\vec{v}\|=\sqrt{117}$ which can be extended to
$\big\{\vec{v}, (-8, 7, -2)\big\}$
preserving the norm but it is not possible to extend this  orthogonal set (or any other containing $\vec{v}$) to an orthogonal basis formed by vectors of norm~$\sqrt{117}$.

\medskip

These and other examples {in different dimensions} suggest a finer formulation of the problem above separating different norms and allowing partial extensions. With this idea in mind, we introduce some notation.
Let $\mathcal{O}_d(N,n)$ be the collection of sets $\mathcal{S}\subset\Z^d$ of orthogonal vectors of norm $\sqrt{N}$ with $\#\mathcal{S}=n$. We are interested in characterizing 
\[
 \mathcal{C}_d(n_1,n_2)
 =
 \big\{
 N\in\Z^+\,:\,
 \forall
 \mathcal{S}\in \mathcal{O}_d(N,n_1)
 \ \exists
 \mathcal{S}'\in \mathcal{O}_d(N,n_2)
 \text{ with }
 \mathcal{S}'\supset \mathcal{S}
 \big\}
\]
for $1\le n_1<n_2\le d$.

\medskip

{In the case $d=4$ the aforementioned result of \cite{LaGa} can be rephrased with this notation saying that, for any $1\le n_1<n_2\le 4$,  $\mathcal{C}_d(n_1,n_2)$ contains the prime numbers. However,} the main conjecture in that paper is that the prime numbers do not play any role in the problem for $d=4$:

\smallskip

%\paragraph{\bf Conjecture \cite{LaGa}.}
{\begin{conjecture} \cite{LaGa}\label{conjecture}.
{\sl Problem \ref{problem} has an affirmative answer when $d=4$. In other words, $\mathcal{C}_4(n_1,n_2)=\Z^+$ for $1\le n_1<n_2\le 4$.}
\end{conjecture}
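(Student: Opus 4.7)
The plan is to identify $\Z^4$ with the Lipschitz order $\mathcal{L}=\Z\langle 1,i,j,k\rangle$ of integer quaternions via $(a,b,c,d)\leftrightarrow a+bi+cj+dk$, so that $\langle p,q\rangle=\operatorname{Re}(p\bar q)$ and $\|q\|^2=q\bar q$. Since $\mathcal{C}_4(n_1,n_1+2)\supset\mathcal{C}_4(n_1,n_1+1)\cap\mathcal{C}_4(n_1+1,n_1+2)$, it suffices to prove $\mathcal{C}_4(n_1,n_1+1)=\Z^+$ for $n_1=1,2,3$. The case $n_1=1$ is essentially free: for $q\in\mathcal{L}$ with $q\bar q=N$, the family $\{q,qi,qj,qk\}\subset\mathcal{L}$ is an orthogonal basis of $\R^4$ with all vectors of norm $\sqrt N$, because $\langle q\alpha,q\beta\rangle=\operatorname{Re}(q\alpha\bar\beta\bar q)=N\operatorname{Re}(\alpha\bar\beta)=0$ for distinct $\alpha,\beta\in\{1,i,j,k\}$.

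For $n_1=3$, attach to orthogonal $q_1,q_2,q_3\in\mathcal{L}$ of norm $\sqrt N$ the quaternionic triple product $w:=q_2\bar q_1 q_3\in\mathcal{L}$. Using $q_\ell\bar q_\ell=N$ and the cyclicity of $\operatorname{Re}$, one checks that $w\bar w=N^3$ and $\langle w,q_\ell\rangle=0$ for every $\ell$, so the unique candidate fourth vector is $\pm w/N$, and everything reduces to the divisibility $w\in N\mathcal{L}$. My plan for this is prime by prime. For odd $p\mid N$, the isomorphism $\mathcal{L}/p\mathcal{L}\cong M_2(\mathbb{F}_p)$ sends quaternion conjugation to matrix adjugation, so each $q_\ell$ reduces to a rank-$\leq 1$ matrix $A_\ell=u_\ell v_\ell^T$ with $\operatorname{tr}(A_i\operatorname{adj}(A_j))\equiv 0\pmod p$, and a short case analysis on the lines spanned by the $u_\ell$ and $v_\ell$ forces $A_2\operatorname{adj}(A_1)A_3\equiv 0\pmod p$. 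The prime $p=2$ would be handled by first passing to the Hurwitz order $\mathcal{H}\supset\mathcal{L}$ (where $\mathcal{H}/2\mathcal{H}\cong M_2(\mathbb{F}_2)$), and higher prime powers by running the same analysis in $\mathcal{L}/p^{v_p(N)}\mathcal{L}$.

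The case $n_1=2$ is the main obstacle. Writing $q_2=q_1u$ with $u=\bar q_1 q_2/N\in\mathbb{H}_\Q$ a pure imaginary unit quaternion, one must find $x\in\mathcal{L}$ with $x\perp q_1,\,x\perp q_2$ and $x\bar x=N$; equivalently, the Gram form of the rank-$2$ sublattice $M:=q_1^\perp\cap q_2^\perp\cap\mathcal{L}$ must represent $N$. A determinantal computation gives $\det M=e^2$ with $e=[\mathcal{L}:\Z q_1+\Z q_2+M]/N\in\Z^+$, so $M$ has square discriminant and its Gram form splits over $\Q$ as a norm form. The plan is to combine this parabolic structure, the action on $M$ induced by right-multiplication by pure imaginary unit quaternions orthogonal to $u$, and the class-number-one property of $\mathcal{H}$, to produce a representation of $N$ that lies inside $\mathcal{L}$ after adjustment by a Hurwitz unit. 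The genuine difficulty here is that binary forms of a given square discriminant do not represent every positive integer, so the argument must use the particular provenance of $M$ from $(q_1,q_2)$ in an essential way; the prime $p=2$ (where $\mathcal{L}$ fails to be maximal) and prime powers $p^k\mid N$ with $k\geq 2$ are likely to require dedicated local work.
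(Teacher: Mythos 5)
There is a genuine gap, and it sits exactly where the theorem is hardest. For $n_1=2$ you do not give an argument at all: you reduce to showing that the binary lattice $M=q_1^\perp\cap q_2^\perp\cap\mathcal{L}$ represents $N$, correctly observe that its discriminant is a square, and then state a ``plan'' combining the parabolic structure, a unit action, and class number one --- while yourself conceding that binary forms of square discriminant need not represent every integer and that $p=2$ and higher prime powers ``require dedicated local work.'' That is a program, not a proof. The paper's route here is quite different and is worth internalizing: it writes $\vec w=(\ell_1\vec a+\ell_2\vec b+\ell_3\vec c)/Q$ in the basis of $v^\perp$ coming from $\mathbf{i}\mathbf{v},\mathbf{j}\mathbf{v},\mathbf{k}\mathbf{v}$, so that $(\ell_1,\ell_2,\ell_3,Q)$ is a Pythagorean quadruple; it parametrizes that quadruple as $\mathbf{q}\mathbf{u}\overline{\mathbf{q}}$, rotates $\mathbf{u}$ to an orthogonal unit $\mathbf{u}'$ to produce the candidate third vector, and proves integrality by an explicit B\'ezout identity $\mathbf{q}_1\mathbf{q}+\mathbf{q}_2\mathbf{q}\mathbf{u}=2$ (valid when $\mathbf{q}\mathbf{u}\overline{\mathbf{q}}$ is primitive), which forces the odd number $Q$ to divide $\overline{\mathbf{q}}\mathbf{v}$. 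Nothing in your sketch supplies a substitute for that integrality mechanism.

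Two further points on the cases you do address. For $n_1=3$ your triple product $w=q_2\bar q_1 q_3$ is a nice idea and the identities $w\bar w=N^3$, $\langle w,q_\ell\rangle=0$ do check out, but the whole content is the divisibility $N\mid w$, which you again only sketch; moreover the specific claim $\mathcal{H}/2\mathcal{H}\cong M_2(\mathbb{F}_2)$ for the Hurwitz order is false --- the quaternion algebra $(-1,-1)_{\Q}$ is ramified at $2$, so that quotient is a local ring with residue field $\mathbb{F}_4$, not a matrix algebra --- and the passage from $\mathcal{L}/p\mathcal{L}$ to $\mathcal{L}/p^{v_p(N)}\mathcal{L}$ is not the ``same analysis,'' since rank arguments over $\mathbb{F}_p$ do not transfer verbatim to $\Z/p^k\Z$. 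The paper avoids all of this with a purely linear-algebraic determinant identity ($|\det(M_{(j)})|^2=N^{d-2}(N-\|\vec c_j\|^2)$ for any even $d$), which hands you the fourth vector with integral coordinates at once. Your $n_1=1$ argument is correct and is essentially the paper's (left versus right multiplication by $\mathbf{i},\mathbf{j},\mathbf{k}$ is immaterial).
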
}
\medskip

 One of our main results is the full proof of this conjecture. {For instance,
$\vec{v}= (4, 5, 6, 7)$
and $\vec{w}= (-7, -2, -3, 8)$
are vectors in $\Z^4$ with norm $\sqrt{126}$
and we find that adding the vectors
$(-5, -4, 9, -2)$
and
$(-6,  9,  0, -3)$,
having also norm $\sqrt{126}$, we get an orthogonal basis.}

\medskip

The structure of the paper is as follows. In \S\ref{scod1} we treat the case of sets $\mathcal{S}\subset  \mathcal{O}_d(N,d-1)$. It is proved (Corollary~\ref{dminus1}, Corollary~\ref{odddminus1}) that in this situation the extension is always possible if $d$ is even and only possible for integral norms if $d$ is odd. After a digression to introduce some arithmetic results on quaternions in \S\ref{squat}, we face the cases of dimension~$3$ and~$4$ in \S\ref{sdim3}
and
\S\ref{sdim4}.
The solution in the latter case is complete (Theorem~\ref{case4}) and we prove the main conjecture in \cite{LaGa}. For $d=3$ we get a solution of the problem with the original statement (Theorem~\ref{case3sq}) but we fail to completely characterize the norms such that for any vector of that norm there exists a new orthogonal vector of the same norm.
We show (Proposition~\ref{curious}) that these norms when squared are numbers representable as a sum of two squares, in particular they have vanishing asymptotic density. The numerical computations suggest that they form a much more sparse sequence. Finally, in \S6 we prove some results for the higher dimensional cases.

\section{The case of codimension~$1$}\label{scod1}

It will be convenient to consider the following linear algebra result, which is related to the so called matrix determinant lemma \cite[Lemma 1.1]{DiZh} but we have not found it in the literature.

\begin{proposition}\label{linalg}
 Let $A$ be the $(d-1)\times d$ matrix formed by the first $d-1$ rows of an orthogonal matrix. Let $\vec{c}$ be the first column of $A$ and $B$ the rest of the matrix.
 Then
 $|\det(B)|^2= 1-\|\vec{c}\|^2$.
\end{proposition}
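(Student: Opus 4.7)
The plan is to complete $A$ to a full orthogonal $d \times d$ matrix in two different (but equivalent) ways, each of which leads directly to the desired identity. Both approaches rest on the observation that the constraint of orthogonality on $A$ propagates onto $B$ in a very rigid way.

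My first approach would be to exploit the row orthonormality of $A$ directly. Writing $A = [\vec{c}\mid B]$ in block form (with $\vec{c}$ a column of height $d-1$ and $B$ square of size $d-1$), the condition $AA^T = I_{d-1}$ translates into the matrix identity
\[
  BB^T \;=\; I_{d-1} - \vec{c}\,\vec{c}^{\,T}.
\]
Taking determinants on both sides gives $|\det(B)|^2 = \det(I_{d-1} - \vec{c}\,\vec{c}^{\,T})$, and the right-hand side equals $1-\|\vec{c}\|^2$ by the rank-one update formula for determinants, which is precisely the matrix determinant lemma cited in the paper. This is the shortest route and makes the connection to the matrix determinant lemma explicit.

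Alternatively, and perhaps more illuminating, I would append to $A$ an extra row $\vec{r}$ so that the resulting $d\times d$ matrix $M$ is orthogonal; such an $\vec{r}$ exists and is unique up to sign. Expanding $\det(M)$ along the last row shows that the entries of $\vec{r}$ are, up to signs, the $(d-1)\times(d-1)$ minors of $A$, and the relation $M^{-1}=M^T$ turns these minor identities into $|r_1|=|\det(B)|$. On the other hand, the first column of $M$ is $(\vec{c},r_1)^T$ and has unit norm, which yields $r_1^2 = 1-\|\vec{c}\|^2$. Combining these two facts gives the claim.

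I do not expect any serious obstacle here: once the block decomposition is written down, the identity is essentially forced. The only subtlety is a notational one, namely keeping track of which block is a column versus a row so that $\vec{c}\,\vec{c}^{\,T}$ and $BB^T$ live in the correct $(d-1)\times(d-1)$ ambient space; once this is set up correctly, both proofs are a few lines each. I would present the first argument in the paper since it is more self-contained, and remark on the second as motivation for why the statement is natural.
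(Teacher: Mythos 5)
Your proposal is correct, and both routes reach the identity. Your first argument starts exactly where the paper does, with the block relation $\vec{c}\,\vec{c}^{\,t}+BB^{t}=I$ coming from $AA^{t}=I$; the only difference is in how $\det\bigl(I-\vec{c}\,\vec{c}^{\,t}\bigr)=1-\|\vec{c}\|^{2}$ is justified. You invoke the matrix determinant lemma as a known rank-one update formula, whereas the paper (which explicitly says it could not locate the precise statement it needs in the literature) proves the required instance inline by factoring the $d\times d$ block matrix
\[
\begin{pmatrix} I&\vec{c}\\ \vec{c}^{\,t}&1\end{pmatrix}
\]
in two ways and comparing determinants; so the paper's proof is your first proof with the lemma unpacked into a self-contained two-line computation. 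Your second argument is genuinely different: completing $A$ to a full orthogonal matrix $M$, using $M^{-1}=M^{t}$ together with the adjugate formula to identify $|r_{1}|$ with $|\det(B)|$, and reading off $r_{1}^{2}=1-\|\vec{c}\|^{2}$ from the unit norm of the first column. That route is clean and conceptually explains why the statement holds, and it foreshadows Proposition~\ref{dminusw}, where the completing row built from signed minors is exactly the vector $\vec{w}$; its only mild cost is that the identification of the entries of $\vec{r}$ with signed minors needs the full adjugate relation rather than just the Laplace expansion, so state that step carefully. Either write-up would be acceptable; the paper's choice buys self-containedness at the price of a slightly opaque block factorization.
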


\begin{proof}
 Since $AA^t=I$, with $I$ the $(d-1)$-identity matrix, we have
 $\vec{c}\vec{c}^t+BB^t=I$ and the following identities {hold} with $\vec{0}$ the null vector in $\R^{d-1}$:
 \[
  \begin{pmatrix}
   \vec{c}&B
   \\
   1&\vec{0}^t
  \end{pmatrix}
  \begin{pmatrix}
   \vec{c}^t&1
   \\
   B^t&\vec{0}
  \end{pmatrix}
  =
  \begin{pmatrix}
   I&\vec{c}
   \\
   \vec{c}^t&1
  \end{pmatrix}
  =
  \begin{pmatrix}
   I&\vec{0}
   \\
   \vec{c}^t&1
  \end{pmatrix}
  \begin{pmatrix}
   I&\vec{c}
   \\
   \vec{0}^t&1-\|\vec{c}\|^2
  \end{pmatrix}.
 \]
 Taking determinants
 $(-1)^{d-1}\det(B)\cdot (-1)^{d-1}\det(B^t)=1\cdot (1-\|\vec{c}\|^2)$ and the result follows.
\end{proof}

\begin{corollary}\label{detnorm}
 Let $M$ be a $(d-1)\times d$ integral matrix such that its rows form a set in $\mathcal{O}_d(N,d-1)$. Let $\vec{c}_j$ be its $j$-th column and $M_{(j)}$ the resulting square matrix when it is omitted. Then
 $
  |\det(M_{(j)})|^2= N^{d-2}(N-\|\vec{c}_j\|^2).
 $
\end{corollary}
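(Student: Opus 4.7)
The plan is to reduce the statement to Proposition~\ref{linalg} by a simple rescaling. Since the rows of $M$ are pairwise orthogonal with common squared norm $N$, the matrix $A = N^{-1/2} M$ has orthonormal rows in $\R^d$, so by Gram--Schmidt it can be completed to a $d\times d$ orthogonal matrix whose first $d-1$ rows are the rows of~$A$. Hence $A$ fits exactly into the hypothesis of Proposition~\ref{linalg}.

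Proposition~\ref{linalg} as stated singles out the first column, but the statement is plainly invariant under permutation of the columns of the ambient orthogonal matrix: swapping two columns preserves orthogonality (it just multiplies the full determinant by $-1$), and both sides of the identity only involve absolute values and norms. Thus for every index $j$ I obtain
\[
 |\det(A_{(j)})|^2 \;=\; 1-\|\vec{a}_j\|^2,
\]
where $\vec{a}_j$ denotes the $j$-th column of~$A$ and $A_{(j)}$ is $A$ with that column removed.

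Now I translate back to~$M$. Because $A = N^{-1/2}M$, we have $\vec{a}_j = N^{-1/2}\vec{c}_j$, hence $\|\vec{a}_j\|^2 = \|\vec{c}_j\|^2/N$, and $A_{(j)} = N^{-1/2}M_{(j)}$ is a $(d-1)\times(d-1)$ matrix, so $\det(A_{(j)}) = N^{-(d-1)/2}\det(M_{(j)})$. Substituting into the displayed identity and clearing the power of~$N$ yields
\[
 |\det(M_{(j)})|^2 \;=\; N^{d-1}\bigl(1-\|\vec{c}_j\|^2/N\bigr) \;=\; N^{d-2}\bigl(N-\|\vec{c}_j\|^2\bigr),
\]
which is exactly the claim.

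I do not foresee any real obstacle here: the argument is essentially bookkeeping of factors of $\sqrt{N}$ together with the observation that Proposition~\ref{linalg} is symmetric in the columns. The only point worth stating carefully is the completion of the rows of $A$ to an orthonormal basis, which is what licenses invoking Proposition~\ref{linalg} in the first place.
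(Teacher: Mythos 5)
Your proof is correct and follows essentially the same route as the paper: apply Proposition~\ref{linalg} to $A=N^{-1/2}M$ (after completing its orthonormal rows to an orthogonal matrix), handle general $j$ by permuting columns, and track the powers of $\sqrt{N}$. The paper's proof is just a terser version of the same bookkeeping.
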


\begin{proof}
 For $j=1$ apply Proposition~\ref{linalg} to $A=N^{-1/2} M$ to get the formula
 $(N^{-1/2})^{2(d-1)}|\det(M_{(j)})|^2= 1-\|\vec{c}_1\|^2/N$, which gives the result. For $j\ne 1$ the same argument works permuting the columns.
\end{proof}

Note that for $d$ even this implies that $N-\|\vec{c}_j\|^2$ is always a square, which does not seem obvious at all. Also, for the first nontrivial odd case  $d=3$, we would have to prove that given $(a,b,c)\in\Z^3$ with $a^2+b^2+c^2=N$, any solution of the Diophantine equation
\[
 \begin{cases}
  x^2+y^2+z^2=N \\ ax+by+cz=0
 \end{cases}
\]
verifies that $N(N-a^2-x^2)$ is a square. Proving it without using the previous result is a challenge harder than it seems. The shortest proof, based only on direct algebraic manipulations, that we have found is:
\begin{align*}
 N(N-a^2-x^2)
 &=(N-a^2)N-Nx^2\\
 & =
 (b^2+c^2)(x^2+y^2+z^2)-(a^2+b^2+c^2)x^2\\
 &=
 -a^2x^2+(b^2+c^2)(y^2+z^2).
\end{align*}
By the second equation in the Diophantine system, $a^2x^2=(by+cz)^2$. Thus
\begin{align*}
 N(N-a^2-x^2)
 &=-(by+cz)^2+b^2y^2+c^2z^2+b^2z^2+c^2y^2\\
 & =b^2z^2+c^2y^2-2bycz=(bz-cy)^2.
\end{align*}

\medskip

The next proposition is a generalization of a result discussed in \cite[\S2]{LaGa} for $d=4$. Note that the proof given there for that particular dimension, by ``polynomial checking'', requires cumbersome calculations, difficult to check without a computer based algebraic manipulator and it is unclear how to generalize the procedure.

\begin{proposition}\label{dminusw}
 With the notation as in Corollary~\ref{detnorm}, for $d$ even the row vector $\vec{w}$ with coordinates $w_j=(-1)^j N^{(2-d)/2}\det(M_{(j)})$ satisfies $\vec{w}\in \Z^d$, is orthogonal to the rest of the rows of $M$ and has the same norm $\sqrt{N}$.
\end{proposition}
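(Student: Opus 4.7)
The plan is to recognize that $\vec{w}$ is, up to the scaling factor $N^{(2-d)/2}$, the classical generalized cross product / Laplace cofactor vector of the $d-1$ rows of $M$. Once this is seen, the proposition splits into three separate checks: orthogonality to the given rows, correct norm, and integrality of the components. The natural order is orthogonality first (purely formal), then norm (immediate from Corollary~\ref{detnorm}), and integrality last (where the hypothesis that $d$ is even enters in an essential way).

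First I would dispatch orthogonality, which owes nothing to the arithmetic. For each row $\vec{r}_i$ of $M$, I form the $d\times d$ matrix obtained by placing $\vec{r}_i$ on top of $M$. It has two equal rows, so its determinant vanishes, and the Laplace expansion along the first row produces exactly $\sum_{j=1}^d(-1)^{1+j}(r_i)_j\det(M_{(j)})=0$, i.e.\ $N^{(d-2)/2}\,\vec{w}\cdot\vec{r}_i=0$, so $\vec{w}$ is orthogonal to $\vec{r}_i$.

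Next I would compute the norm via Corollary~\ref{detnorm}, which rewrites $w_j^2=N-\|\vec{c}_j\|^2$. Summing over $j$ and using that $\sum_{j=1}^d\|\vec{c}_j\|^2$ equals the total sum of squared entries of $M$, which is $(d-1)N$ because $M$ has $d-1$ rows each of norm $\sqrt{N}$, I get
\[
 \|\vec{w}\|^2=\sum_{j=1}^d(N-\|\vec{c}_j\|^2)=dN-(d-1)N=N.
\]

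The main obstacle I foresee is integrality. The naive statement to prove is $N^{(d-2)/2}\mid\det(M_{(j)})$, which looks unpleasant to verify directly without coordinates. My plan is to sidestep this via a soft argument: since $M$ has integer entries, $\det(M_{(j)})\in\Z$, and since $d$ is even, $N^{(d-2)/2}\in\Z^+$, so at least $w_j\in\Q$; but the norm computation above shows $w_j^2\in\Z$, and a rational number whose square is an integer must itself be an integer. The parity of $d$ is exactly what makes $N^{(d-2)/2}$ rational, which is why the argument is confined to the even-dimensional case.
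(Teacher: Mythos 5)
Your proposal is correct and follows the paper's proof almost exactly: orthogonality via the Laplace expansion of a determinant with a repeated row, and the norm via summing the identity of Corollary~\ref{detnorm} over the columns. The only point of divergence is the integrality step. The paper argues directly with divisibility: Corollary~\ref{detnorm} gives $N^{d-2}\mid\det(M_{(j)})^2$, and since $d$ is even this forces $N^{(d-2)/2}\mid\det(M_{(j)})$ (compare prime valuations), hence $w_j\in\Z$. You instead note that $w_j\in\Q$ (because $d$ even makes $N^{(2-d)/2}$ rational and $\det(M_{(j)})\in\Z$) while $w_j^2=N-\|\vec{c}_j\|^2\in\Z$, and conclude by the fact that a rational number with integral square is an integer. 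These are two phrasings of the same underlying fact (that $\Z$ is integrally closed in $\Q$), so both are valid; yours is arguably slightly cleaner in that it reuses the norm identity you have already established rather than invoking a separate divisibility deduction, and it makes transparent exactly where the parity of $d$ is used.
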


\begin{proof}
 The dot product of $\vec{w}$ and the $i$-th row of $M$ is proportional to the sum
 $\sum_{j=1}^d(-1)^{i+j} m_{ij} \det(M_{(j)})$ and it vanishes because it is the expansion along the first row of $M$ completed to a square matrix repeating the $i$-th row as first row. Then $w_j$ is orthogonal to the rows of $M$.

 Using Corollary~\ref{detnorm},
 \[
  \|\vec{w}\|^2
  =
  N^{2-d}
  \sum_{j=1}^d
  N^{d-2}(N-\|\vec{c}_j\|^2)
  =
  dN
  -
  \sum_{j=1}^d
  \|\vec{c}_j\|^2
  =
  dN-(d-1)N =N.
 \]
 Then $\vec{w}$ has the same norm as the rows of $M$.

 Finally, note that Corollary~\ref{detnorm} assures that
 $N^{d-2}$ divides $|\det(M_{(j)})|^2$. If $d$ is even it implies $N^{(d-2)/2}\mid \det(M_{(j)})$, hence $w_j\in\Z$.
\end{proof}
\smallskip
{The previous proposition gives us directly the following}
\begin{corollary}\label{dminus1}
 If $d$ is even then $\mathcal{C}_d(d-1,d)=\Z^+$.
\end{corollary}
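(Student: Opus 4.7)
My plan is to observe that the corollary is essentially an immediate translation of Proposition~\ref{dminusw} into the language of the sets $\mathcal{O}_d(N,n)$ and $\mathcal{C}_d(n_1,n_2)$. Concretely, I would fix $N \in \Z^+$, take any $\mathcal{S} \in \mathcal{O}_d(N,d-1)$, and arrange its $d-1$ vectors as the rows of a $(d-1)\times d$ integral matrix $M$. By definition of $\mathcal{O}_d(N,d-1)$ the rows of $M$ are mutually orthogonal and all have norm $\sqrt{N}$, so $M$ is of precisely the form required by Proposition~\ref{dminusw}.

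Next I would invoke Proposition~\ref{dminusw} (which uses that $d$ is even) to produce the row vector $\vec{w}$ with coordinates $w_j = (-1)^j N^{(2-d)/2}\det(M_{(j)})$. The proposition gives us simultaneously that $\vec{w} \in \Z^d$, that $\vec{w}$ is orthogonal to every row of $M$, and that $\|\vec{w}\|=\sqrt{N}$. Hence $\mathcal{S}' := \mathcal{S}\cup\{\vec{w}\}$ consists of $d$ integral vectors of norm $\sqrt{N}$ which are pairwise orthogonal, so $\mathcal{S}' \in \mathcal{O}_d(N,d)$ and $\mathcal{S}' \supset \mathcal{S}$. Since $\mathcal{S}$ was arbitrary and $N$ was arbitrary, this shows $N \in \mathcal{C}_d(d-1,d)$ for every $N \in \Z^+$, which is exactly the claim.

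There is essentially no obstacle to overcome here: all the substantive work (integrality, orthogonality, and the norm identity) has already been packaged into Proposition~\ref{dminusw}. The only thing to verify is that $\vec{w}$ is \emph{new}, i.e., linearly independent of the rows of $\mathcal{S}$; but this is automatic because $\vec{w}$ is orthogonal to each row of $M$ and has positive norm, so it cannot lie in their span. Thus the proof is just a direct application of the previous proposition.
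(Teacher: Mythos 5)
Your proposal is correct and coincides with the paper's treatment: the authors state that Corollary~\ref{dminus1} follows directly from Proposition~\ref{dminusw}, which is exactly the application you carry out. Your extra remark that $\vec{w}$ is genuinely new (being orthogonal to the rows of $M$ and of positive norm) is a harmless and valid addition.
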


\medskip

The odd case can also be treated with similar tools.

\begin{proposition}\label{odddminus1p}
 If $d$ is odd and $N$ is not a square, then
 $\mathcal{S}\in\mathcal{O}_d(N,d-1)$
 cannot be extended to
 $\mathcal{S}'\in\mathcal{O}_d(N,d)$.
\end{proposition}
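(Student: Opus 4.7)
\medskip

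\noindent\textbf{Plan.} The idea is to exploit the essential uniqueness of the completing vector and then read off a parity constraint on the exponent of $N$. Let $M$ be the $(d-1)\times d$ integer matrix whose rows are the elements of $\mathcal{S}$. Since these rows are nonzero and pairwise orthogonal, they are linearly independent, so the orthogonal complement of the row space in $\R^d$ is one-dimensional.

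First I would revisit the proof of Proposition~\ref{dminusw} and observe that the two facts ``the vector $\vec{u}$ with $u_j=(-1)^j N^{(2-d)/2}\det(M_{(j)})$ is orthogonal to every row of $M$'' and ``$\|\vec{u}\|=\sqrt{N}$'' are established there without any use of the parity of $d$; only the final integrality step invokes $d$ even. Consequently, $\vec{u}$ lies in the one-dimensional orthogonal complement of the rows and has the prescribed norm $\sqrt{N}$, so any putative extension $\vec{w}\in\Z^d$ with $\|\vec{w}\|=\sqrt{N}$ must satisfy $\vec{w}=\pm\vec{u}$.

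Next I would translate the integrality requirement $w_j\in\Z$ into the identity
\[
 N^{(d-2)/2}\,w_j = \pm(-1)^{j}\det(M_{(j)})\in\Z.
\]
Assume $d$ is odd and write $N^{(d-2)/2}=N^{(d-3)/2}\sqrt{N}$, noting that $N^{(d-3)/2}\in\Z$. Because $\|\vec{w}\|^2=N>0$, at least one coordinate $w_j$ is nonzero, and the relation above then yields $\sqrt{N}\in\Q$; together with $N\in\Z^+$ this forces $N$ to be a perfect square, contradicting the hypothesis.

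I do not anticipate a real obstacle: the argument is essentially a parity-of-exponent observation layered on top of Proposition~\ref{dminusw}. The only point requiring a brief justification is that $\vec{u}$ is literally the unique (up to sign) real vector of norm $\sqrt{N}$ orthogonal to the rows of $M$, which follows at once from the one-dimensionality of the orthogonal complement.
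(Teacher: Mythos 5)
Your proposal is correct and follows essentially the same route as the paper: both arguments note that the orthogonality and norm computations in Proposition~\ref{dminusw} are independent of the parity of $d$, so the completing vector is forced up to sign to be $\vec{u}$ with $u_j=(-1)^jN^{(2-d)/2}\det(M_{(j)})$, and then conclude from the irrationality of $N^{(2-d)/2}$ (equivalently, your explicit deduction that $\sqrt{N}\in\Q$ would follow from integrality) that $\vec{u}\notin\Z^d$. The only cosmetic difference is that you phrase the final step as a contradiction extracting $\sqrt{N}\in\Q$, while the paper observes directly that an integer times the irrational $N^{(2-d)/2}$ cannot be a nonzero integer.
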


\begin{proof}
 By definition, $\mathcal{S}\in\mathcal{O}_d(N,d-1)$ spans a subspace $V$ of dimension $d-1$ in $\R^d$. Its orthogonal  complement $V^\perp$ is spanned by $\vec{w}\ne\vec{0}$ in Proposition~\ref{dminusw} and $\vec{w}$ and $-\vec{w}$ are the only vectors with norm $\sqrt{N}$ in $V$, because the proof of Proposition~\ref{dminusw}  only appeals to the parity of $d$ in the final divisibility condition. Hence it is enough to note that $\vec{w}\not\in\Z^d$, which is obvious because $\det(M_{(j)})\in \Z$ and $N^{(2-d)/2}\not\in\Q$.
\end{proof}

\begin{corollary}\label{odddminus1}
 If $d$ is odd then $\mathcal{C}_d(d-1,d)=\{n^2\,:\, n\in\Z^+\}$.
\end{corollary}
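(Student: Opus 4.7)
The plan is to establish both inclusions in the claimed equality. For the inclusion $\{n^2 : n\in\Z^+\} \subseteq \mathcal{C}_d(d-1,d)$, I would revisit the proof of Proposition~\ref{dminusw} and note that the only step where $d$ even was used is the divisibility $N^{(d-2)/2} \mid \det(M_{(j)})$ needed to conclude $w_j\in\Z$; the orthogonality argument (expansion along the first row of an augmented matrix) and the norm computation $\|\vec{w}\|^2 = dN-(d-1)N = N$ are oblivious to the parity of $d$. When $N = n^2$ and $d$ is odd, Corollary~\ref{detnorm} reads
\[
 |\det(M_{(j)})|^2 = n^{2(d-2)}\bigl(n^2 - \|\vec{c}_j\|^2\bigr),
\]
so $n^{2(d-2)} \mid \det(M_{(j)})^2$ and hence $n^{d-2} \mid \det(M_{(j)})$. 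Since $N^{(2-d)/2} = n^{2-d}$, the coordinates $w_j = (-1)^j n^{2-d}\det(M_{(j)})$ are integers, and the resulting $\vec{w}$ provides the required extension of $\mathcal{S}$ to an orthogonal basis in $\mathcal{O}_d(N,d)$. Note that this also shows, as a byproduct, that for odd $d$ and $N=n^2$ the quantity $n^2-\|\vec{c}_j\|^2$ must itself be a perfect square.

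For the reverse inclusion $\mathcal{C}_d(d-1,d) \subseteq \{n^2 : n\in\Z^+\}$, I would proceed by contrapositive. If $N$ is not a perfect square, Proposition~\ref{odddminus1p} directly asserts that no $\mathcal{S} \in \mathcal{O}_d(N,d-1)$ can be extended to an element of $\mathcal{O}_d(N,d)$, since the unique (up to sign) candidate $\vec{w}$ spanning the one-dimensional orthogonal complement of $\mathrm{span}(\mathcal{S})$ with the correct norm has irrational coordinates. Therefore, as soon as an $\mathcal{S} \in \mathcal{O}_d(N,d-1)$ exists, it serves as a witness that $N \notin \mathcal{C}_d(d-1,d)$.

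The main obstacle is really bookkeeping: verifying that the divisibility argument in the proof of Proposition~\ref{dminusw} adapts cleanly once $N$ is replaced by $n^2$ and the exponent $(d-2)/2$ by $d-2$. Everything else in the two cited propositions propagates without modification, so the corollary follows by directly combining them.
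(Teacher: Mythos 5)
Your proposal is correct and follows essentially the same route as the paper: the reverse inclusion is exactly Proposition~\ref{odddminus1p}, and the forward inclusion comes from observing that the only parity-dependent step in the proof of Proposition~\ref{dminusw} is the integrality of $N^{(2-d)/2}\det(M_{(j)})$, which holds once $N=n^2$ since then $N^{d-2}\mid|\det(M_{(j)})|^2$ gives $n^{d-2}\mid\det(M_{(j)})$. The paper compresses this into one line; your version merely spells out the divisibility explicitly.
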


\begin{proof}
 The only addition to
 Proposition~\ref{odddminus1p} is that $N^{(d-2)/2}\in\Z$ if $N$ is a square  and the proof of
 Proposition~\ref{dminusw} applies.
\end{proof}

\section{Some considerations about quaternions}\label{squat}

The cases $d=3$ and $d=4$ are treated using the Hamilton quaternions $\mathcal{H}$. The purpose of this section is to introduce some notation and state some results for later reference. Recall that $\mathcal{H}$ is composed by expressions of the form
\[
 \mathbf{q}
 =
 a_0
 +a_1\mathbf{i}
 +a_2\mathbf{j}
 +a_3\mathbf{k}
 \qquad\text{with }a_j\in\R
\]
and $(\mathcal{H},+,\cdot)$ becomes an associative normed division algebra over $\R$ imposing $\mathbf{i}^2=\mathbf{j}^2=\mathbf{k}^2=\mathbf{i}\mathbf{j}\mathbf{k}=-1$ and with the squared norm $\|\mathbf{q}\|^2=\sum_{j=0}^3a_j^2=\mathbf{q}\overline{\mathbf{q}}$ where $\overline{\mathbf{q}}$ is the conjugate quaternion
$ a_0
 -a_1\mathbf{i}
 -a_2\mathbf{j}
-a_3\mathbf{k}$.
It is important to keep in mind $\overline{\mathbf{q}_1\mathbf{q}_2}=\overline{\mathbf{q}_2}\,\, \overline{\mathbf{q}_1}$.

A well known theorem due to Frobenius assures that $\mathcal{H}$ is the largest associative division algebra over $\R$ and Hurwitz proved that if we drop the associativity keeping the norm, the only possible extension is the algebra of Cayley numbers (also named octonions). These results impose a limit to extend our approach to higher dimensions.  The book \cite{KaSo} is a nice introduction to these and other topics at an elementary level (see also \cite{stillwell} for the role of~$\mathcal{H}$ as a Lie group). We refer the reader to it for the basic properties of the quaternions.
\smallskip

In our case, we are going to consider only quaternions with $a_j\in\Z$ and
$\mathcal{H}_\Z$
will denote this set. From the algebraic point of view,
$\mathcal{H}_\Z$
is the lattice generated by $\{1,\mathbf{i},\mathbf{j},\mathbf{k}\}$ over $\Z$
and it misses $(1+\mathbf{i}+\mathbf{j}+\mathbf{k})/2$ to constitute a maximal order (Hurwitz's quaternions). It causes a parity issue in some contexts. It is known  that replacing
$\mathcal{H}_\Z$
by
the maximal order we would have unique factorization in a highly non obvious way \cite{CoPe}, \cite[\S5]{CoSm}. We prefer to avoid here any reference to factorization because our results admit proofs without entering into this intricate topic, although our initial approach was partially based on it.
\smallskip

We consider the embedding of $\Z^3$ in $\mathcal{H}_\Z$ given by
\[
 \vec{a}=
 (a_1,a_2,a_3)\in\Z^3
 \lhook\joinrel\xrightarrow{\qquad}
 \mathbf{q}_{\vec{a}}=
 a_1\mathbf{i}
 +a_2\mathbf{j}
 +a_3\mathbf{k}
 \in\mathcal{H}_\Z.
\]
The relation between vectors in dimension $3$ and quaternions is not spurious and in fact the motivation of Hamilton was to find a vector multiplication resembling the complex number product and its relation to rotations.

\begin{lemma}\label{qdotcross}
 We have
 $\mathbf{q}_{\vec{a}}\mathbf{q}_{\vec{b}}
 =-\vec{a}\cdot \vec{b}+\mathbf{q}_{\vec{a}\times\vec{b}}$
 where $\vec{a}\cdot \vec{b}$ and $\vec{a}\times\vec{b}$ are the usual dot and cross products.
\end{lemma}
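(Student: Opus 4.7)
The plan is to verify the identity by direct expansion, exploiting the bilinearity of the quaternion product. Writing $\vec{a}=(a_1,a_2,a_3)$ and $\vec{b}=(b_1,b_2,b_3)$, the product $\mathbf{q}_{\vec{a}}\mathbf{q}_{\vec{b}}$ unfolds into nine terms $a_i b_j\,\mathbf{e}_i\mathbf{e}_j$ with $\mathbf{e}_1=\mathbf{i}$, $\mathbf{e}_2=\mathbf{j}$, $\mathbf{e}_3=\mathbf{k}$. The three diagonal contributions use $\mathbf{i}^2=\mathbf{j}^2=\mathbf{k}^2=-1$ and collectively produce $-(a_1b_1+a_2b_2+a_3b_3)=-\vec{a}\cdot\vec{b}$, which will be the scalar part of the answer.

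For the remaining six off-diagonal terms I would group them by pairs $(i,j)$ and $(j,i)$ with $i\ne j$, and apply the Hamilton relations $\mathbf{i}\mathbf{j}=\mathbf{k}=-\mathbf{j}\mathbf{i}$, $\mathbf{j}\mathbf{k}=\mathbf{i}=-\mathbf{k}\mathbf{j}$, $\mathbf{k}\mathbf{i}=\mathbf{j}=-\mathbf{i}\mathbf{k}$ (which are immediate consequences of $\mathbf{i}\mathbf{j}\mathbf{k}=-1$ together with the squared-norm conditions). Each pair collapses to a single basis element with coefficient of the form $a_ib_j-a_jb_i$, so the off-diagonal terms together give $(a_2b_3-a_3b_2)\mathbf{i}+(a_3b_1-a_1b_3)\mathbf{j}+(a_1b_2-a_2b_1)\mathbf{k}$, which is precisely $\mathbf{q}_{\vec{a}\times\vec{b}}$.

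There is no real mathematical obstacle here: the lemma is essentially a reformulation of the fact that the structure constants of $\mathcal{H}$ restricted to the imaginary units encode exactly the Euclidean dot and cross products on $\R^3$. The only care required is sign bookkeeping in the Hamilton table, and one can alternatively justify the identity without computation by noting that both sides are bilinear in $(\vec{a},\vec{b})$, so it is enough to check the nine cases where $\vec{a}$ and $\vec{b}$ run over the canonical basis of $\R^3$.
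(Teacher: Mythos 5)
Your proof is correct and is exactly the direct calculation the paper has in mind: its own proof simply states ``it reduces to a calculation'' and cites a reference, so your expansion into diagonal and off-diagonal terms via the Hamilton relations fills in precisely that computation. Nothing further is needed.
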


\begin{proof}
 It reduces to a calculation \cite[\S4.1]{KaSo}.
\end{proof}

\begin{lemma}\label{quatvector}
 If $\vec{a}\in\Z^3$ and $\mathbf{q}\in\mathcal{H}_{\Z}$, then $\mathbf{q}\mathbf{q}_{\vec{a}}\overline{\mathbf{q}}=\mathbf{q}_{\vec{b}}$ for some $\vec{b}\in\Z^3$
 and $\|\vec{b}\|=\|\mathbf{q}\|^2\|\vec{a}\|$.
\end{lemma}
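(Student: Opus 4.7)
The plan is to verify three things in succession: that $\mathbf{q}\mathbf{q}_{\vec{a}}\overline{\mathbf{q}}$ is a \emph{pure} quaternion (vanishing real part), that its coefficients are integers, and that its norm matches the asserted value. Each of these is a short consequence of basic quaternion algebra, so the proof amounts to assembling them.

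First I would check purity. A quaternion $\mathbf{p}$ lies in the image of the embedding $\vec{b}\mapsto\mathbf{q}_{\vec{b}}$ if and only if $\overline{\mathbf{p}}=-\mathbf{p}$. Since $\mathbf{q}_{\vec{a}}$ is pure we have $\overline{\mathbf{q}_{\vec{a}}}=-\mathbf{q}_{\vec{a}}$, and using the identity $\overline{\mathbf{q}_1\mathbf{q}_2}=\overline{\mathbf{q}_2}\,\overline{\mathbf{q}_1}$ recalled in the previous section,
\[
 \overline{\mathbf{q}\mathbf{q}_{\vec{a}}\overline{\mathbf{q}}}
 =\mathbf{q}\,\overline{\mathbf{q}_{\vec{a}}}\,\overline{\mathbf{q}}
 =-\mathbf{q}\mathbf{q}_{\vec{a}}\overline{\mathbf{q}}.
\]
So $\mathbf{q}\mathbf{q}_{\vec{a}}\overline{\mathbf{q}}=\mathbf{q}_{\vec{b}}$ for some $\vec{b}\in\R^3$.

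Next I would observe that $\mathcal{H}_{\Z}$ is closed under both multiplication and conjugation (the defining relations have integer coefficients, and conjugation just negates the imaginary components). Hence $\mathbf{q}\mathbf{q}_{\vec{a}}\overline{\mathbf{q}}\in\mathcal{H}_{\Z}$, which forces $\vec{b}\in\Z^3$.

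Finally, the norm on $\mathcal{H}$ is multiplicative, $\|\overline{\mathbf{q}}\|=\|\mathbf{q}\|$, and $\|\mathbf{q}_{\vec{a}}\|=\|\vec{a}\|$, so
\[
 \|\vec{b}\|=\|\mathbf{q}_{\vec{b}}\|=\|\mathbf{q}\|\,\|\mathbf{q}_{\vec{a}}\|\,\|\overline{\mathbf{q}}\|=\|\mathbf{q}\|^2\|\vec{a}\|.
\]
There is no real obstacle here; the only conceptual point is the purity step, and the ``challenge'' lies more in recognizing that conjugation by $\overline{\mathbf{q}}$ (rather than by $\mathbf{q}^{-1}$) accounts for the factor $\|\mathbf{q}\|^2$ in the norm, which is exactly what makes the statement useful for producing integral vectors of a controlled norm from a given one.
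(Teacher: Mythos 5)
Your proof is correct and follows essentially the same route as the paper: purity via the anti-automorphism property of conjugation and $\overline{\mathbf{q}_{\vec{a}}}=-\mathbf{q}_{\vec{a}}$, integrality from closure of $\mathcal{H}_{\Z}$ under the operations, and the norm from multiplicativity. Nothing to add.
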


In geometric terms, $\vec{b}$ is obtained from $\vec{a}$ after a rotation and a homothety depending on $\mathbf{q}$ \cite[\S3.1]{CoSm}.

\begin{proof}
 Clearly $\mathbf{q}\mathbf{q}_{\vec{a}}\overline{\mathbf{q}}\in\mathcal{H}_{\Z}$
 and the conjugate of $\mathbf{q}_{\vec{a}}$ is $-\mathbf{q}_{\vec{a}}$. Then the conjugate of $\mathbf{q}\mathbf{q}_{\vec{a}}\overline{\mathbf{q}}$ equals its negative and hence its first coordinate vanishes.
 Plainly $\|\mathbf{q}_{\vec{a}}\|=\|\vec{a}\|$ and $\|\vec{b}\|=\|\mathbf{q}\|^2\|\vec{a}\|$ follows since the norm is multiplicative.
\end{proof}

\begin{lemma}\label{qijk}
 Given $\mathbf{q}\in\mathcal{H}_{\Z}$, define $\vec{a},\vec{b},\vec{c}\in\Z^3$
 by
 $\mathbf{q}_{\vec{a}}=\mathbf{q}\mathbf{i}\overline{\mathbf{q}}$,
 $\mathbf{q}_{\vec{b}}=\mathbf{q}\mathbf{j}\overline{\mathbf{q}}$,
 $\mathbf{q}_{\vec{c}}=\mathbf{q}\mathbf{k}\overline{\mathbf{q}}$.
 Then $\{\vec{a},\vec{b},\vec{c}\}$ is an orthogonal set of vectors of the same norm.
\end{lemma}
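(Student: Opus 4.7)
The plan is to derive all three assertions---integrality, equal norms, and orthogonality---from the two preceding lemmas together with the multiplication table of $\mathbf{i}, \mathbf{j}, \mathbf{k}$. First I would apply Lemma~\ref{quatvector} three times, once for each of the purely imaginary units $\mathbf{i}, \mathbf{j}, \mathbf{k}$, viewed as $\mathbf{q}_{\vec{e}_1}, \mathbf{q}_{\vec{e}_2}, \mathbf{q}_{\vec{e}_3}$ for the canonical basis of $\Z^3$. This immediately gives $\vec{a}, \vec{b}, \vec{c} \in \Z^3$ and $\|\vec{a}\| = \|\vec{b}\| = \|\vec{c}\| = \|\mathbf{q}\|^2$, since each canonical basis vector has unit norm.

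For orthogonality, the key trick is to exploit that $\overline{\mathbf{q}}\mathbf{q} = \|\mathbf{q}\|^2 \in \R$ commutes with everything. Associativity then yields
\[
\mathbf{q}_{\vec{a}}\mathbf{q}_{\vec{b}} = (\mathbf{q}\mathbf{i}\overline{\mathbf{q}})(\mathbf{q}\mathbf{j}\overline{\mathbf{q}}) = \|\mathbf{q}\|^2\, \mathbf{q}\mathbf{i}\mathbf{j}\overline{\mathbf{q}} = \|\mathbf{q}\|^2\, \mathbf{q}\mathbf{k}\overline{\mathbf{q}} = \|\mathbf{q}\|^2\, \mathbf{q}_{\vec{c}},
\]
which is a pure quaternion (its real part vanishes). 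Comparing with Lemma~\ref{qdotcross}, which asserts $\mathbf{q}_{\vec{a}}\mathbf{q}_{\vec{b}} = -\vec{a}\cdot\vec{b} + \mathbf{q}_{\vec{a}\times\vec{b}}$, forces $\vec{a}\cdot\vec{b} = 0$. Repeating the same manipulation with the pair $(\vec{b}, \vec{c})$ via $\mathbf{j}\mathbf{k} = \mathbf{i}$ and with $(\vec{c}, \vec{a})$ via $\mathbf{k}\mathbf{i} = \mathbf{j}$ completes the orthogonality check.

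I do not anticipate any real obstacle: the argument is a short combination of the previous two lemmas. The only mild subtlety is noticing that one should multiply $\mathbf{q}_{\vec{a}}$ and $\mathbf{q}_{\vec{b}}$ as quaternions rather than attempt a coordinatewise dot product, and then use associativity to collapse the middle factor $\overline{\mathbf{q}}\mathbf{q}$ to a real scalar. As a byproduct of the computation, $\vec{a}\times\vec{b} = \|\mathbf{q}\|^2\,\vec{c}$, confirming that the resulting orthogonal triple is positively oriented up to the common scaling factor $\|\mathbf{q}\|^2$.
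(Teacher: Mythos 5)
Your proof is correct and follows essentially the same route as the paper: Lemma~\ref{quatvector} for integrality and equal norms, then the collapse of $\overline{\mathbf{q}}\mathbf{q}$ to the real scalar $\|\mathbf{q}\|^2$ in the product $\mathbf{q}_{\vec{a}}\mathbf{q}_{\vec{b}}=\|\mathbf{q}\|^2\mathbf{q}_{\vec{c}}$ combined with Lemma~\ref{qdotcross} to read off $\vec{a}\cdot\vec{b}=0$. No gaps; the cyclic repetition for the other two pairs is exactly what the paper does as well.
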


\begin{proof}
 First of all, note that Lemma~\ref{quatvector} assures that $\vec{a}$, $\vec{b}$ and $\vec{c}$ are well defined and they have the same norm.

 By Lemma~\ref{qdotcross} and a direct calculation,
 $-\vec{a}\cdot \vec{b}+\mathbf{q}_{\vec{a}\times\vec{b}}=\mathbf{q}_{\vec{a}}\mathbf{q}_{\vec{b}}=\|\mathbf{q}\|^2\mathbf{q}\mathbf{k}\overline{\mathbf{q}}=\|\mathbf{q}\|^2\mathbf{q}_{\vec{c}}$ then
 $\vec{a}\cdot\vec{b}=0$. The same argument shows $\vec{b}\cdot\vec{c}=\vec{c}\cdot\vec{a}=0$.
\end{proof}
\smallskip

Following \cite{CoPe}, we say that $\mathbf{q}\in\mathcal{H}_{\Z}$ is primitive if $ \mathbf{q}=m\mathbf{q}'$ with $m\in\Z^+$ and $\mathbf{q}'\in\mathcal{H}_{\Z}$ implies $m=1$.
In other words, if the coefficients of $\mathbf{q}$ have not a nontrivial common factor.

The next result is just a synthetic form of writing the parametrization of the Pythagorean quadruples.

\begin{proposition}\label{quatpyth}
 If $\vec{a}\in\Z^3$, $\|\vec{a}\|\in\Z^+$ and $\mathbf{q}_{\vec{a}}$ is primitive, there exists $\mathbf{q}\in\mathcal{H}_{\Z}$ such that
 $\mathbf{q}_{\vec{a}}\in \{\mathbf{q}\mathbf{i}\overline{\mathbf{q}}, \mathbf{q}\mathbf{j}\overline{\mathbf{q}}, \mathbf{q}\mathbf{k}\overline{\mathbf{q}}\}$.
\end{proposition}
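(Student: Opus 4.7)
The plan is to reduce the statement to a factorization problem in the Gaussian integers $\Z[i]$. First I would show that $N=\|\vec{a}\|$ must be odd: reducing $a_1^2+a_2^2+a_3^2=N^2$ modulo~$4$ and using that squares are $0$ or $1$ modulo~$4$, if $N$ were even then all of $a_1,a_2,a_3$ would have to be even, contradicting the primitivity of $\mathbf{q}_{\vec{a}}$. Hence $N$ is odd and exactly one of the coordinates $a_1,a_2,a_3$ is odd. By the symmetric role of $\mathbf{i},\mathbf{j},\mathbf{k}$ in the statement I may assume that $a_1$ is the odd one and look for $\mathbf{q}$ with $\mathbf{q}_{\vec{a}}=\mathbf{q}\mathbf{i}\overline{\mathbf{q}}$; the remaining two cases follow identically, with $\mathbf{j}$ or $\mathbf{k}$ in place of $\mathbf{i}$.

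Writing $\mathbf{q}=q_0+q_1\mathbf{i}+q_2\mathbf{j}+q_3\mathbf{k}$, a direct expansion yields
\[
 \mathbf{q}\mathbf{i}\overline{\mathbf{q}}
 =(q_0^2+q_1^2-q_2^2-q_3^2)\mathbf{i}
 +2(q_0q_3+q_1q_2)\mathbf{j}
 +2(q_1q_3-q_0q_2)\mathbf{k}.
\]
Setting $M=(N+a_1)/2$ and $M'=(N-a_1)/2$ (both integers since $N,a_1$ are odd) and introducing the Gaussian integers $\alpha=q_0+q_1i$, $\beta=q_2+q_3i$, the system $\mathbf{q}\mathbf{i}\overline{\mathbf{q}}=\mathbf{q}_{\vec{a}}$ together with $\|\mathbf{q}\|^2=N$ becomes equivalent to the three conditions $|\alpha|^2=M$, $|\beta|^2=M'$, and $\alpha\beta=\gamma$, where $\gamma:=-a_3/2+(a_2/2)i\in\Z[i]$. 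The norms are automatically consistent since $MM'=(N^2-a_1^2)/4=(a_2/2)^2+(a_3/2)^2=|\gamma|^2$.

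The heart of the argument is to show that $M$ and $M'$ are each a sum of two squares. By the classical criterion this amounts to checking that no prime $p\equiv 3\pmod 4$ divides $\gcd(M,M')$: such a prime would divide both $M+M'=N$ and $M-M'=a_1$, hence $p\mid N^2-a_1^2=a_2^2+a_3^2$, and since $p\equiv 3\pmod 4$ it would force $p\mid a_2$ and $p\mid a_3$, contradicting $\gcd(a_1,a_2,a_3)=1$. Because $MM'=|\gamma|^2$ is already a sum of two squares, the exponent of any such prime in $MM'$ is even and, by the previous remark, lies entirely in one of $M$ or $M'$, giving the claim. With $M$ and $M'$ representable, the factorization $\gamma=\alpha\beta$ with the prescribed norms is obtained by distributing the Gaussian primes dividing $\gamma$ between $\alpha$ and $\beta$ so that the $p$-adic valuations of $|\alpha|^2$ and $|\beta|^2$ match those of $M$ and $M'$ (the arithmetic of $\Z[i]$ as a PID ensures all the constraints are simultaneously solvable), with any residual Gaussian unit absorbed into $\alpha$. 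Reassembling $\mathbf{q}$ from $\alpha,\beta$ concludes the proof. The main obstacle I foresee is the sum-of-two-squares claim for $M$ and $M'$, since it is precisely where both the primitivity of $\vec{a}$ and the parity reduction enter in an essential way.
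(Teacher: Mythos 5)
Your proposal is correct, and it is a genuine proof of a statement that the paper itself does not really prove: the paper's argument for Proposition~\ref{quatpyth} is a two-line remark that expanding $\mathbf{q}\mathbf{u}\overline{\mathbf{q}}$ yields the classical parametrization of Pythagorean quadruples, with the surjectivity of that parametrization delegated entirely to the references \cite{spira} and \cite{cremona}. You work with exactly the same parametrization (your expansion of $\mathbf{q}\mathbf{i}\overline{\mathbf{q}}$ is correct), but you then supply the missing surjectivity argument via $\Z[i]$: the parity analysis showing $N$ odd with exactly one odd coordinate, the reduction to $|\alpha|^2=M$, $|\beta|^2=M'$, $\alpha\beta=\gamma$, and the key observation that no prime $p\equiv 3\pmod 4$ divides $\gcd(M,M')$ (which uses primitivity exactly where you say it does) are all sound. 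The final step--distributing the Gaussian prime factors of $\gamma$ so that $\alpha$ has norm exactly $M$--is stated a little breezily, but it does check out prime by prime: for split $p=\pi\bar\pi$ one needs exponents $s\le v_\pi(\gamma)$, $t\le v_{\bar\pi}(\gamma)$ with $s+t=v_p(M)$, which exists because $v_\pi(\gamma)+v_{\bar\pi}(\gamma)=v_p(M)+v_p(M')$; for inert and ramified primes the choice is forced and admissible. So your write-up is in effect a self-contained proof of the Spira--Cremona parametrization adapted to the statement here; if anything it would make the paper more self-contained than the citation does. One tiny point worth making explicit: the reduction ``WLOG $a_1$ is odd'' should be justified by the cyclic-permutation automorphism of $\mathcal{H}$ permuting $\mathbf{i},\mathbf{j},\mathbf{k}$ (as the paper does in the proof of Proposition~\ref{quatgcd}), which is precisely why the conclusion is membership in the three-element set rather than a single identity.
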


\begin{proof}
 Expanding the products we have the parametrizations of the Pytha\-gorean quadruple $(a_1,a_2,a_3,\|\vec{a}\|)$.
 See the details in \cite{cremona} and \cite{spira}.
\end{proof}

Passing to the maximal order, the following result could be rephrased saying that couples like $\mathbf{q}$ and $\mathbf{q}\mathbf{i}$ are coprime to the right when $\mathbf{q}\mathbf{i}\overline{\mathbf{q}}$ is primitive although they share the factor $\mathbf{q}$ to the left. It will be crucial in our solution of the problem for $d=4$.

\begin{proposition}\label{quatgcd}
 Let $\mathbf{u}\in\{\mathbf{i}, \mathbf{j}, \mathbf{k}\}$
 and $\mathbf{q}\in\mathcal{H}_{\Z}$.
 If $\mathbf{q}\mathbf{u}\overline{\mathbf{q}}$ is primitive then there exist $\mathbf{q}_1,\mathbf{q}_2\in\mathcal{H}_{\Z}$ such that
 $\mathbf{q}_1\mathbf{q}+\mathbf{q}_2\mathbf{q}\mathbf{u}=2$.
\end{proposition}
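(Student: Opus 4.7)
The plan is to pass to the Hurwitz maximal order $\mathcal{O}\supset\mathcal{H}_\Z$, whose norm furnishes a (right and left) Euclidean algorithm, deduce a Bezout identity for $\mathbf{q}$ and $\mathbf{q}\mathbf{u}$ there, and then clear the $1/2$-denominators by multiplying by $2$. The two structural facts I would rely on are that (i) every finitely generated left ideal of $\mathcal{O}$ is principal, and (ii) $2\mathcal{O}\subseteq\mathcal{H}_\Z$, because the elements of $\mathcal{O}\setminus\mathcal{H}_\Z$ all lie in the coset $\omega+\mathcal{H}_\Z$ with $\omega=(1+\mathbf{i}+\mathbf{j}+\mathbf{k})/2$ and therefore have half-integer coefficients.

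By (i), $\mathcal{O}\mathbf{q}+\mathcal{O}\mathbf{q}\mathbf{u}=\mathcal{O}\mathbf{d}$ for some $\mathbf{d}\in\mathcal{O}$. Writing $\mathbf{q}=\mathbf{a}\mathbf{d}$ and $\mathbf{q}\mathbf{u}=\mathbf{b}\mathbf{d}$ with $\mathbf{a},\mathbf{b}\in\mathcal{O}$, I then compute
\[
\mathbf{q}\mathbf{u}\overline{\mathbf{q}}=\mathbf{b}\,\mathbf{d}\overline{\mathbf{d}}\,\overline{\mathbf{a}}=\|\mathbf{d}\|^{2}\,\mathbf{b}\overline{\mathbf{a}}.
\]
The key observation is that the left-hand side is pure imaginary, so $\mathbf{b}\overline{\mathbf{a}}\in\mathcal{O}$ also has zero real part; but every element of $\omega+\mathcal{H}_\Z$ has real part in $\tfrac12+\Z$, so any element of $\mathcal{O}$ with vanishing real part must already belong to $\mathcal{H}_\Z$. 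Hence $\mathbf{b}\overline{\mathbf{a}}\in\mathcal{H}_\Z$, and $\|\mathbf{d}\|^{2}$ divides each coordinate of $\mathbf{q}\mathbf{u}\overline{\mathbf{q}}$; the primitivity hypothesis then forces $\|\mathbf{d}\|^{2}=1$, so $\mathbf{d}$ is a unit of $\mathcal{O}$ and $\mathcal{O}\mathbf{q}+\mathcal{O}\mathbf{q}\mathbf{u}=\mathcal{O}$.

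Choosing $\mathbf{r},\mathbf{s}\in\mathcal{O}$ with $\mathbf{r}\mathbf{q}+\mathbf{s}\mathbf{q}\mathbf{u}=1$ and multiplying by $2$, I arrive at $(2\mathbf{r})\mathbf{q}+(2\mathbf{s})\mathbf{q}\mathbf{u}=2$, where $\mathbf{q}_1:=2\mathbf{r}$ and $\mathbf{q}_2:=2\mathbf{s}$ lie in $\mathcal{H}_\Z$ by (ii), which is the asserted identity. The crux I anticipate is the Lipschitz conclusion $\mathbf{b}\overline{\mathbf{a}}\in\mathcal{H}_\Z$: without the real-part trick the identity $\|\mathbf{d}\|^{2}\mathbf{b}\overline{\mathbf{a}}=\mathbf{q}\mathbf{u}\overline{\mathbf{q}}$ combined with primitivity would only restrict $\|\mathbf{d}\|^{2}$ to $\{1,2\}$, and it is precisely the pure-imaginary nature of $\mathbf{q}\mathbf{u}\overline{\mathbf{q}}$ together with the description of $\mathcal{O}\setminus\mathcal{H}_\Z$ in (ii) that excludes the value $\|\mathbf{d}\|^{2}=2$.
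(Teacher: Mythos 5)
Your proof is correct, but it takes a genuinely different route from the paper's. You pass to the Hurwitz maximal order $\mathcal{O}$, use that it is a left principal ideal ring to write $\mathcal{O}\mathbf{q}+\mathcal{O}\mathbf{q}\mathbf{u}=\mathcal{O}\mathbf{d}$, and then rule out a nontrivial right divisor via the identity $\mathbf{q}\mathbf{u}\overline{\mathbf{q}}=\|\mathbf{d}\|^2\mathbf{b}\overline{\mathbf{a}}$ together with the observation that a Hurwitz quaternion with zero real part is already Lipschitz; the factor $2$ then enters only at the end, from $2\mathcal{O}\subseteq\mathcal{H}_\Z$. Your identification of the crux is accurate: without the pure-imaginary argument one only gets $\|\mathbf{d}\|^2\in\{1,2\}$, and excluding $2$ is what makes the denominators clear cleanly. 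The paper instead deliberately avoids the arithmetic of the maximal order (the authors say explicitly in \S\ref{squat} that they prefer not to invoke factorization in $\mathcal{O}$, though their initial approach was partially based on it): they exhibit the explicit left combinations $\mathbf{q}-\mathbf{i}\mathbf{q}\mathbf{i}=2(a+b\mathbf{i})$ and $-\mathbf{j}\mathbf{q}+\mathbf{k}\mathbf{q}\mathbf{i}=2(c-d\mathbf{i})$, reduce the Bezout problem to coprimality of the Gaussian integers $a+bi$ and $c-di$, and derive that coprimality from primitivity via $-\mathbf{q}\mathbf{i}\overline{\mathbf{q}}\mathbf{i}=a^2+b^2-(c^2+d^2)+2(a+b\mathbf{i})(c+d\mathbf{i})\mathbf{j}$, finishing $\mathbf{j}$ and $\mathbf{k}$ by the cyclic-permutation automorphism. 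The trade-off: the paper's proof is elementary and effectively computable, needing only the Euclidean algorithm in $\Z[i]$, and there the factor $2$ arises naturally from the displayed identities; yours is shorter and more conceptual, making transparent the ``coprime to the right'' interpretation the authors mention before the statement, at the cost of importing the nontrivial fact that $\mathcal{O}$ is left-Euclidean (hence a left PID) as a black box.
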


\begin{proof}
 Let $\mathbf{q}=a+b\mathbf{i}
 +c\mathbf{j}
 +d\mathbf{k}$. A calculation shows
 \[
  \mathbf{q}-\mathbf{i}\mathbf{q}\mathbf{i} = 2(a+b\mathbf{i})
  \qquad\text{and}\qquad
  -\mathbf{j}\mathbf{q}+\mathbf{k}\mathbf{q}\mathbf{i} = 2(c-d\mathbf{i}).
 \]
 The Gaussian integers are embedded in $\mathcal{H}_\Z$ via $i\mapsto \mathbf{i}$ preserving the operations of the algebra $\mathcal{H}$ \cite[\S6.1]{KaSo}.
 Assuming that $a+bi$ and $c-di$ are coprime Gaussian integers, the Euclidean algorithm gives $A,B,C,D\in\Z$ such that
 \[
  (A+B\mathbf{i})(\mathbf{q}-\mathbf{i}\mathbf{q}\mathbf{i})
  +
  (C-D\mathbf{i})(-\mathbf{j}\mathbf{q}+\mathbf{k}\mathbf{q}\mathbf{i})
  = 2
 \]
 and it would prove the result for $\mathbf{u}=\mathbf{i}$ with
 $\mathbf{q}_1= (A+B\mathbf{i}) -(C-D\mathbf{i})\mathbf{j}$
 and
 $\mathbf{q}_2= (B-A\mathbf{i}) +(C-D\mathbf{i})\mathbf{k}$.

 Let us see that the existence of a Gaussian prime dividing $a+bi$ and $c-di$ leads to a contradiction. Let $p$ be the rational prime over it i.e., $(p)$ is the prime ideal in $\Z$ determined by the integers divisible by the Gaussian prime. Then
 \[
  p\mid (a+bi)(a-bi)=a^2+b^2,
  \quad
  p\mid (c-di)(c+di)=c^2+d^2,
  \quad
  p\mid (a+bi)(c+di).
 \]
 A calculation using Lemma~\ref{qdotcross} shows
 \[
  -\mathbf{q}\mathbf{i}\overline{\mathbf{q}}\mathbf{i}
  =
  a^2+b^2-(c^2+d^2) +2(a+b\mathbf{i})(c+d\mathbf{i})\mathbf{j}.
 \]
 Then $p$ divides  the coefficients of $-\mathbf{q}\mathbf{i}\overline{\mathbf{q}}\mathbf{i}$ and hence those of $\mathbf{q}\mathbf{i}\overline{\mathbf{q}}$, contradicting that it is primitive. This concludes the proof for $\mathbf{u}=\mathbf{i}$.

 Finally, note that any circular permutation of $\mathbf{i}$, $\mathbf{j}$, $\mathbf{k}$ induces a bijective map $C$ on $\mathcal{H}$ preserving the algebra operations,
 $C(\mathbf{q}_1+\mathbf{q}_2)=C(\mathbf{q}_1)+C(\mathbf{q}_2)$,
 $C(\mathbf{q}_1\mathbf{q}_2)=C(\mathbf{q}_1)C(\mathbf{q}_2)$,
 because it preserves the relations defining the algebra. Then the result for
 $\mathbf{u}=\mathbf{i}$ implies it for $\mathbf{j}$ and $\mathbf{k}$.
\end{proof}

\section{The case $d=3$}\label{sdim3}

We start showing that there is a restriction on the factorization of $N$ if we want $\mathcal{O}_3(N,2)$ to be nonempty. Recall the notation $p^\alpha\|N$ with $p^\alpha$ a prime power meaning $p^\alpha\mid N$ and $p^{\alpha+1}\nmid N$.

\begin{proposition}\label{curious}
 If $\{\vec{a},\vec{b}\}\in\mathcal{O}_3(N,2)$ then $N$ is representable as a sum of two squares.
\end{proposition}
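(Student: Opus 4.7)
My plan is to establish the classical criterion that $N$ is a sum of two squares iff every prime $p\equiv 3\pmod 4$ occurs to an even power in $N$, and to prove the latter by $p$-adic descent, using the determinantal identity supplied by Corollary~\ref{detnorm}.

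The first step is to instantiate Corollary~\ref{detnorm} with $d=3$ and the $2\times 3$ integer matrix $M$ whose rows are $\vec{a}^t$ and $\vec{b}^t$. The $j$-th column has squared norm $a_j^2+b_j^2$, while the $2\times 2$ minor $\det(M_{(j)})$ is, up to a sign, the $j$-th coordinate $c_j$ of the cross product $\vec{c}=\vec{a}\times\vec{b}$. The corollary therefore yields
\[
 c_j^2 \;=\; N\bigl(N-a_j^2-b_j^2\bigr), \qquad j=1,2,3.
\]

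Next, fix a prime $p\equiv 3\pmod 4$ with $p\mid N$, let $e=\nu_p(N)\ge 1$, and assume $e$ is odd in order to derive a contradiction. The identity forces $\nu_p(c_j^2)\ge e$, and since $e$ is odd this improves to $\nu_p(c_j)\ge (e+1)/2$. Consequently $\nu_p(c_j^2/N)\ge 1$, i.e.\ $p\mid N-a_j^2-b_j^2$; combined with $p\mid N$, this gives $p\mid a_j^2+b_j^2$ for every $j$. Because $-1$ is not a square modulo $p$ (precisely the point where the hypothesis $p\equiv 3\pmod 4$ enters), each such congruence forces $p\mid a_j$ and $p\mid b_j$ separately.

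Finally, the divisibility $p\mid a_j,b_j$ for every $j$ yields a new pair $(\vec{a}/p,\vec{b}/p)\in \mathcal{O}_3(N/p^2,2)$ with $\nu_p(N/p^2)=e-2$, still odd. Iterating the whole argument $(e-1)/2$ times reaches a pair with $\nu_p=1$; one further application would then imply $p^2\mid N/p^{e-1}$, contradicting $\nu_p(N/p^{e-1})=1$. Hence $e$ must be even for every prime $p\equiv 3\pmod 4$ dividing $N$, and $N$ is a sum of two squares. The main obstacle to this approach is simply identifying Corollary~\ref{detnorm} as the right tool to pair with the classical input $p\equiv 3\pmod 4\Rightarrow -1\notin(\mathbb F_p^\times)^2$; after that, the $p$-adic bookkeeping is routine, and a quaternionic rephrasing does not seem to give anything shorter because $\|\mathbf{q}_{\vec{c}}\|^2=N^2$ only hands us a four-square representation of $N$, not a two-square one.
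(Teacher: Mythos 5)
Your proof is correct and rests on the same two ingredients as the paper's: Corollary~\ref{detnorm} with $d=3$ (giving $c_j^2=N(N-a_j^2-b_j^2)$ for the cross-product coordinates) together with the classical characterization of sums of two squares via primes $p\equiv 3\pmod 4$. The only difference is in the endgame: the paper compares exact $q$-adic valuations to force $q^{2k}\mid a_j^2+b_j^2$ in a single step and then sums over $j$ to contradict $q^{2k-1}\,\|\, N$, whereas you extract one factor of $p$ from every coordinate and run a descent on $(\vec{a}/p,\vec{b}/p)$; both are sound, and your version sidesteps the paper's case split on whether the two odd valuations coincide.
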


\begin{proof}
 Recall \cite{grosswald} that a positive integer $N$ is representable as a sum of two squares if and only if 
 there does not exist a prime number $q\equiv 3\pmod{4}$ such that $q^{2k-1}\| N$ with $k\in\Z^+$.
 
 \medskip
 
 If $q^{2k-1}\| N$, using Corollary~\ref{detnorm} with $d=3$ and $j=1$, we must have $q^{2k-1+2l-1}\| N(N-a_1^2-b_1^2)$ for some $l\in\Z^+$.
 Let us call  $N= q^{2k-1}N'$ and $N-a_1^2-b_1^2=q^{2l-1}M'$ with $q\nmid N',M'$.
 We have
 \[
  q^{2(k-l)}N'-M'
  =
  \frac{a_1^2+b_1^2}{q^{2l-1}}
  \qquad\text{and}\qquad
  N'-q^{2(l-k)}M'
  =
  \frac{a_1^2+b_1^2}{q^{2k-1}}.
 \]
 If $l\ne k$ we deduce $q^{2\min(k,l)-1}\|a_1^2+b_1^2$ and this is a contradiction. If $l=k$ the only way of avoiding this contradiction is $q^{2k}\mid a_1^2+b_1^2$.
 A circular permutation of the coordinates of $\vec{a}$ and $\vec{b}$ preserves the norm and the orthogonality. Hence we have $q^{2k}\mid a_j^2+b_j^2$ for $j=1,2,3$. Adding these divisibility conditions, we get $q^{2k}\mid N+N$  that contradicts $q^{2k-1}\| N$.
\end{proof}

\begin{corollary}\label{case31nr}
 If a prime of the form $4n+3$ appears in the factorization of $N$ with odd exponent then $\mathcal{O}_3(N,n)$ is empty for $n=2,3$.
\end{corollary}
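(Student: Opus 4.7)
The plan is to derive this as a direct consequence of Proposition~\ref{curious} together with the classical two-squares theorem and a trivial inclusion argument, so no new heavy machinery is needed.

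First I would handle the case $n=2$. Proposition~\ref{curious} states that if $\mathcal{O}_3(N,2)$ is nonempty, then $N$ is representable as a sum of two squares. By the classical theorem of Fermat--Euler (cited in the paper as \cite{grosswald}), a positive integer $N$ admits such a representation if and only if every prime $q\equiv 3\pmod{4}$ appearing in its factorization does so with even exponent. The hypothesis of the corollary is precisely the negation of this condition, so $N$ is not a sum of two squares, and contrapositively $\mathcal{O}_3(N,2)=\emptyset$.

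Next I would dispose of $n=3$ by a one-line observation: any $\mathcal{S}\in\mathcal{O}_3(N,3)$ contains, by simply discarding one vector, a subset $\mathcal{S}_0\in\mathcal{O}_3(N,2)$. Hence $\mathcal{O}_3(N,3)\neq\emptyset$ forces $\mathcal{O}_3(N,2)\neq\emptyset$, and the contrapositive combined with the previous paragraph gives $\mathcal{O}_3(N,3)=\emptyset$ as well.

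There is no real obstacle here; the work has already been done in Proposition~\ref{curious}. The only thing worth being careful about is the order of the logical implications (Proposition~\ref{curious} is phrased as a necessary condition, which is exactly what is needed for a non-existence statement), and reminding the reader that the two-squares theorem is used in the form of its contrapositive.
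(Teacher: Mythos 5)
Your argument is correct and is exactly the (implicit) reasoning of the paper, which states this as an immediate corollary of Proposition~\ref{curious}: the two-squares criterion handles $n=2$ and discarding a vector reduces $n=3$ to $n=2$. Nothing to add.
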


\begin{theorem}\label{case3sq}
 We have
 $\mathcal{C}_3(1,2)\supset\mathcal{C}_3(1,3)=\mathcal{C}_3(2,3)=\{n^2\,:\,n\in\Z^+\}$.
\end{theorem}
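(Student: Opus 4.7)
The plan is to establish the three claims separately, with most of the work going into $\mathcal{C}_3(1,3)=\{n^2:n\in\Z^+\}$. The inclusion $\mathcal{C}_3(1,2)\supset\mathcal{C}_3(1,3)$ is immediate from the definitions: any orthogonal triple of norm $\sqrt{N}$ containing a prescribed $\vec{v}$ restricts to an orthogonal pair containing $\vec{v}$. The equality $\mathcal{C}_3(2,3)=\{n^2:n\in\Z^+\}$ is precisely the odd case $d=3$ of Corollary~\ref{odddminus1}, so only $\mathcal{C}_3(1,3)$ requires real work.

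For the inclusion $\mathcal{C}_3(1,3)\subseteq\{n^2:n\in\Z^+\}$, I would pick any $\vec{v}\in\mathcal{O}_3(N,1)$ and, invoking the hypothesis $N\in\mathcal{C}_3(1,3)$, extend it to a triple $\{\vec{v},\vec{b},\vec{c}\}\in\mathcal{O}_3(N,3)$. The pair $\{\vec{v},\vec{b}\}\in\mathcal{O}_3(N,2)$ then admits an extension to this triple, and Proposition~\ref{odddminus1p} applied contrapositively forces $N$ to be a perfect square. For the converse inclusion, start with $\vec{v}\in\Z^3$ of norm $n$, set $g=\gcd(v_1,v_2,v_3)$, and write $\vec{v}=g\vec{v}'$. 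The identity $\|\vec{v}'\|^2=n^2/g^2\in\Z^+$ forces $g\mid n$, so that $n':=n/g$ is a positive integer equal to the norm of the primitive vector $\vec{v}'$. Since $\mathbf{q}_{\vec{v}'}$ is then primitive, Proposition~\ref{quatpyth} furnishes $\mathbf{q}\in\mathcal{H}_\Z$ with $\mathbf{q}_{\vec{v}'}=\mathbf{q}\mathbf{u}\overline{\mathbf{q}}$ for some $\mathbf{u}\in\{\mathbf{i},\mathbf{j},\mathbf{k}\}$; relabeling via the circular automorphism of $\mathcal{H}$, I may assume $\mathbf{u}=\mathbf{i}$. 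Lemma~\ref{qijk} then yields $\vec{b},\vec{c}\in\Z^3$ making $\{\vec{v}',\vec{b},\vec{c}\}$ an orthogonal triple of common norm $\|\mathbf{q}\|^2=n'$, and multiplying through by $g$ produces the desired triple $\{\vec{v},g\vec{b},g\vec{c}\}\in\mathcal{O}_3(n^2,3)$.

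The mathematical content is entirely carried by the lemmas of \S\ref{squat}, so the main obstacle is not conceptual but the scaling bookkeeping: one must reduce to a primitive vector of \emph{integer} norm before invoking Proposition~\ref{quatpyth}, and this is exactly where the hypothesis that $N=n^2$ is consumed, since it guarantees $g\mid n$ and hence that $\|\vec{v}'\|\in\Z^+$. No new tool beyond \S\ref{squat} is needed.
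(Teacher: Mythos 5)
Your proposal is correct and follows essentially the same route as the paper's proof: the trivial inclusion $\mathcal{C}_3(1,2)\supset\mathcal{C}_3(1,3)$, Corollary~\ref{odddminus1} and Proposition~\ref{odddminus1p} for the two containments in $\{n^2:n\in\Z^+\}$, and reduction to a primitive vector followed by Proposition~\ref{quatpyth} and Lemma~\ref{qijk} for the reverse inclusion. Your only additions are bookkeeping the paper leaves implicit (checking $g\mid n$ so the primitive vector has integral norm, and spelling out the contrapositive use of Proposition~\ref{odddminus1p}), which do not change the argument.
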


\begin{proof}
 The inclusion $\mathcal{C}_3(1,2)\supset\mathcal{C}_3(1,3)$
 is trivial and we know $\mathcal{C}_3(2,3)=\{n^2\,:\,n\in\Z^+\}$ by Corollary~\ref{odddminus1} and  $\mathcal{C}_3(1,3)\subset\{n^2\,:\,n\in\Z^+\}$ by Proposition~\ref{odddminus1p}.
 Then we have to prove that $\mathcal{C}_3(1,3)$ includes the squares. This means that any $\vec{a}=(a_1,a_2,a_3)\in \Z^3$ with $\|\vec{a}|\in\Z^+$
 can be completed with two other vectors in $\Z^3$ of the same norm to get an orthogonal basis.

 It is plain that we can restrict ourselves to the case $\gcd(a_1,a_2,a_3)=1$, equivalently, $\mathbf{q}_{\vec{a}}$ is primitive.
 By Proposition~\ref{quatpyth}
 $\mathbf{q}_{\vec{a}}\in \{\mathbf{q}\mathbf{i}\overline{\mathbf{q}}, \mathbf{q}\mathbf{j}\overline{\mathbf{q}}, \mathbf{q}\mathbf{k}\overline{\mathbf{q}}\}$ for some
 $\mathbf{q}\in\mathcal{H}_{\Z}$.
 Finally, Lemma~\ref{qijk} shows that we can extend $\vec{a}$ to an orthogonal set $\{\vec{a},\vec{b},\vec{c}\}\subset\Z^3$ of vectors of the same norm.
\end{proof}

Although $\mathcal{C}_3(1,2)$ seems to be close to the squares,
some examples show that the inclusion in Theorem~\ref{case3sq} is strict. For instance, $\mathcal{O}_3(18,1)$ is composed by the vectors $(0,3,3)$, $(1,1,4)$ and all the rearrangements and sign changes of their coordinates. We can complete the first vector with $(0,3,-3)$ and the second with $(3,-3,0)$. Then $18\in\mathcal{C}_3(1,2)$ and $18\not\in\mathcal{C}_3(1,3)$ because it is not a square. A more complicated example without repeated absolute values of the coordinates in all vectors occurs for $N=98$. The relevant representations corresponds to $(0,7,7)$, $(1,4,9)$ and $(3,5,8)$, which can be completed with $(0,7,-7)$, $(5, -8,3)$ and $(9,1,-4)$, respectively. Consequently, $98\in \mathcal{C}_3(1,2)$ and $98\not\in\mathcal{C}_3(1,3)$.

We have not been able of characterizing the difference set $\mathcal{C}_3(1,2)\setminus\mathcal{C}_3(1,3)$.
Running a computer program we have got that this set includes:
\[
	\{
	18,
	45,
	50,
	72,
	85,
	90,
	98,
	117,
	125,
	130,
	162,
	180,
	200,
	242,
	245,
	250,
	288, \dots
	\}.
\]
In fact the listed numbers cover all the values with $N<300$ excluding the trivial cases with essentially only a representation as a sum of three squares (there are only a finite number of them \cite{BaGr} being the largest $427$).

\section{The case $d=4$}\label{sdim4}

We now solve in the affirmative the original problem for $d=4$, which is a conjecture in \cite{LaGa}. 

\begin{theorem}\label{case4}
 For any $1\le n_1<n_2\le 4$ we have $\mathcal{C}_4(n_1,n_2)=\Z^+$. 
\end{theorem}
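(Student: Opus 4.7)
The plan is to prove $\mathcal{C}_4(k,k+1)=\Z^+$ for $k=1,2,3$; iterating across consecutive extensions then yields $\mathcal{C}_4(n_1,n_2)=\Z^+$ for every pair $1\le n_1<n_2\le 4$. The case $k=3$ is Corollary~\ref{dminus1}, and the case $k=1$ is immediate: for $\mathbf{q}_1\in\mathcal{H}_\Z$ of norm $\sqrt{N}$, the choice $\mathbf{q}_2:=\mathbf{q}_1\mathbf{i}$ is an integral quaternion of the same norm with $\mathbf{q}_1\overline{\mathbf{q}_2}=-\mathbf{q}_1\mathbf{i}\overline{\mathbf{q}_1}$ pure by Lemma~\ref{quatvector}, so $\mathbf{q}_1\perp\mathbf{q}_2$. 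The substance is therefore the step $2\to 3$.

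For an orthogonal pair $\mathbf{q}_1,\mathbf{q}_2\in\mathcal{H}_\Z$ of common norm $\sqrt{N}$ the aim is a simultaneous right factorization $\mathbf{q}_1=\mathbf{q}\mathbf{s}$, $\mathbf{q}_2=\mathbf{q}\mathbf{u}\mathbf{s}$ with $\mathbf{q},\mathbf{s}\in\mathcal{H}_\Z$ and $\mathbf{u}\in\{\mathbf{i},\mathbf{j},\mathbf{k}\}$. Once this is secured, for any $\mathbf{u}'\in\{\mathbf{i},\mathbf{j},\mathbf{k}\}\setminus\{\mathbf{u}\}$ the vector $\mathbf{q}_3:=\mathbf{q}\mathbf{u}'\mathbf{s}$ has norm $\|\mathbf{q}\|\|\mathbf{s}\|=\sqrt{N}$ and is orthogonal to both $\mathbf{q}_1,\mathbf{q}_2$, since $\mathbf{q}_3\overline{\mathbf{q}_1}=\|\mathbf{s}\|^2\mathbf{q}\mathbf{u}'\overline{\mathbf{q}}$ and $\mathbf{q}_3\overline{\mathbf{q}_2}=-\|\mathbf{s}\|^2\mathbf{q}\mathbf{u}'\mathbf{u}\overline{\mathbf{q}}$ are both pure.

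To produce the factorization, set $\mathbf{v}:=\mathbf{q}_2\overline{\mathbf{q}_1}$. Orthogonality of $\mathbf{q}_1,\mathbf{q}_2$ makes $\mathbf{v}$ a pure quaternion, and $\|\mathbf{v}\|^2=N^2$. Let $g$ be the $\gcd$ of the three coefficients of $\mathbf{v}$, so that $\mathbf{v}/g$ is primitive with $\|\mathbf{v}/g\|^2=(N/g)^2$. A key parity observation drives the whole argument: $N/g$ must be odd, because otherwise $(N/g)^2\equiv 0\pmod{4}$ would force every coefficient of $\mathbf{v}/g$ to be even, contradicting primitivity. Proposition~\ref{quatpyth} then supplies $\mathbf{q}\in\mathcal{H}_\Z$ with $\|\mathbf{q}\|^2=N/g$ and $\mathbf{u}\in\{\mathbf{i},\mathbf{j},\mathbf{k}\}$ so that $\mathbf{v}/g=\mathbf{q}\mathbf{u}\overline{\mathbf{q}}$, and Proposition~\ref{quatgcd} yields $\mathbf{r}_1,\mathbf{r}_2\in\mathcal{H}_\Z$ with $\mathbf{r}_1\mathbf{q}+\mathbf{r}_2\mathbf{q}\mathbf{u}=2$. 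Right multiplying this B\'ezout identity by $\overline{\mathbf{q}}\mathbf{q}_1$ and simplifying by means of $\mathbf{q}\overline{\mathbf{q}}=N/g$ together with $(\mathbf{v}/g)\mathbf{q}_1=(N/g)\mathbf{q}_2$ produces
\begin{equation*}
2\overline{\mathbf{q}}\mathbf{q}_1=(N/g)\bigl(\mathbf{r}_1\mathbf{q}_1+\mathbf{r}_2\mathbf{q}_2\bigr).
\end{equation*}

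The main obstacle is passing from this identity to the integrality of $\mathbf{s}:=\overline{\mathbf{q}}\mathbf{q}_1/(N/g)$; it is resolved precisely because $N/g$ is odd, so $N/g$ divides $\overline{\mathbf{q}}\mathbf{q}_1$ and $\mathbf{s}\in\mathcal{H}_\Z$. Left multiplication of $\overline{\mathbf{q}}\mathbf{q}_1=(N/g)\mathbf{s}$ by $\mathbf{q}$ then gives $\mathbf{q}_1=\mathbf{q}\mathbf{s}$ and hence $\mathbf{q}_2=\mathbf{q}\mathbf{u}\mathbf{s}$, completing the factorization. The whole argument hinges on the felicitous combination of the parity constraint with the quaternionic coprimality statement of Proposition~\ref{quatgcd}: either alone is insufficient to guarantee that the extending vector lies in $\mathcal{H}_\Z$.
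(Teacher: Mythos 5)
Your proof is correct and follows essentially the same route as the paper's: the same splitting into the cases $n_1=1$, $n_1=3$ (Corollary~\ref{dminus1}) and the hard case $n_1=2$, and for the latter the same combination of Proposition~\ref{quatpyth}, Proposition~\ref{quatgcd} and the oddness of $Q=N/g$, ending with the same extending vector (your primitive pure quaternion $\mathbf{q}_2\overline{\mathbf{q}_1}/g$ is exactly the paper's $\mathbf{q}_{\vec{\ell}}$, and your $\mathbf{q}\mathbf{u}'\mathbf{s}$ is the paper's $\vec{u}$). The only difference is packaging: you phrase the construction as a simultaneous right factorization $\mathbf{q}_1=\mathbf{q}\mathbf{s}$, $\mathbf{q}_2=\mathbf{q}\mathbf{u}\mathbf{s}$, while the paper expands $\vec{w}$ in the basis $\{\mathbf{i}\mathbf{v},\mathbf{j}\mathbf{v},\mathbf{k}\mathbf{v}\}$ and extracts the Pythagorean quadruple $(\ell_1,\ell_2,\ell_3,Q)$, but these are the same computation.
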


In other words, any set $\mathcal{S}\subset\Z^4-\{\vec{0}\}$ of orthogonal vectors of the same norm can be extended to an orthogonal basis $\mathcal{B}\supset\mathcal{S}$ of $\R^4$ keeping all the basis vectors in $\Z^4$ and with the same norm.

\begin{proof}
 Given a (row) vector $\vec{v}=(v_1,v_2,v_3,v_4)\in\Z^4$, consider the quaternion $\mathbf{v}=v_1+v_2\mathbf{i} +v_3\mathbf{j}+v_4\mathbf{k}$ and the vectors $\vec{a}$, $\vec{b}$ and $\vec{c}$ in $\Z^4$ whose coordinates are given, respectively, by the coefficients of
 $\mathbf{i}\mathbf{v}$, $\mathbf{j}\mathbf{v}$ and $\mathbf{k}\mathbf{v}$.
 Clearly the four vectors have the same norm and to settle the case $n_1=1$ we have to show that they are orthogonal. Using
 $\mathbf{i}^2=\mathbf{j}^2=\mathbf{k}^2=-1$ it is deduced that $\vec{v}\cdot\vec{a}$ is the real part (the first coefficient) of
 $\mathbf{v}\overline{\mathbf{i}\mathbf{v}}$
 and it vanishes because this is $\mathbf{v}\overline{\mathbf{v}}(-\mathbf{i})=-\|\vec{v}\|^2\mathbf{i}$.
 The same argument works to prove the orthogonality of any couple of the vectors.
 \smallskip

 The case $n_1=3$ follows from Corollary~\ref{dminus1} with $d=4$.
 \smallskip

 It remains to consider $n_1=2$, which is the harder case. We already know $\mathcal{C}_3(3,4)=\Z^+$ then we have to prove that
 $\{\vec{v},\vec{w}\}\subset\Z^4$
 with $\vec{v}\cdot\vec{w}=0$, $\|\vec{v}\|=\|\vec{w}\|$,
 can  be extended with a new orthogonal vector $\vec{u}\in\Z^4$ of the same norm. Let $\vec{a}$, $\vec{b}$ and $\vec{c}$ be defined as before.
 They generate in $\Q^4$ the orthogonal subspace to $\vec{v}$ so there exist $\ell_1,\ell_2,\ell_3\in\Z$ and $Q\in\Z^+$ such that
 \[
  \vec{w}=
  \frac{\ell_1}{Q}\vec{a}
  +
  \frac{\ell_2}{Q}\vec{b}
  +
  \frac{\ell_3}{Q}\vec{c}
  \qquad\text{with}\quad
  \ell_1^2+\ell_2^2+\ell_3^2
  =
  Q^2.
 \]
 The last relation follows from $\|\vec{v}\|=\|\vec{w}\|$.
 We can assume $\gcd(\ell_1,\ell_2,\ell_3)=1$ freely because otherwise we could simplify the fractions $\ell_j/Q$. Under this assumption necessarily $Q$ is odd because $\ell_j^2\equiv 0,1\pmod{4}$.

 By Proposition~\ref{quatpyth} applied to $\vec{\ell}=(\ell_1,\ell_2,\ell_3)$ there exists $\mathbf{q}\in\mathcal{H}_\Z$ such that
 $\mathbf{q}_{\vec{\ell}}
 =
 \mathbf{q}\mathbf{u}\overline{\mathbf{q}}$
 with $\mathbf{u}\in\{\mathbf{i},\mathbf{j},\mathbf{k}\}$
 and  by Lemma~\ref{qijk}, we obtain $\vec{k}=(k_1,k_2,k_3)\in \Z^3$ orthogonal to $\vec{\ell}$ defined by
 $\mathbf{q}_{\vec{k}}
 =
 \mathbf{q}\mathbf{u}'\overline{\mathbf{q}}$, $\mathbf{u}'\in\{\mathbf{i},\mathbf{j},\mathbf{k}\}$,
 and  $\|\vec{\ell}\|=\|\vec{k}\|=
 \|\mathbf{q}\|^2=Q$.
 Now we are ready to define $\vec{u}$. We take
 $\vec{u}=(k_1\vec{a}+k_2\vec{b}+k_3\vec{c})/Q$, which is orthogonal to $\vec{v}$ and $\vec{w}$ and of the same norm. The only missing point is to show $\vec{u}\in\Z^4$.

 Note that $Q\vec{w}$ is the vector having as coordinates the coefficients of
 \[
  \ell_1\mathbf{i}\mathbf{v}
  +
  \ell_2\mathbf{j}\mathbf{v}
  +
  \ell_3\mathbf{k}\mathbf{v}
  =
  \mathbf{q}_{\vec{\ell}}\mathbf{v}
  =
  \mathbf{q}\mathbf{u}\overline{\mathbf{q}}\mathbf{v}.
 \]
 In particular $Q$ divides
 $\mathbf{q}\mathbf{u}\overline{\mathbf{q}}\mathbf{v}$. A similar chain of equalities shows that the coordinates of $Q\vec{u}$ are the coefficients of
 $\mathbf{q}\mathbf{u}'\overline{\mathbf{q}}\mathbf{v}$. We are going to prove that $Q$ divides
 $\overline{\mathbf{q}}\mathbf{v}$, which implies $\vec{u}\in\Z^4$. Multiplying to the right by
 $\overline{\mathbf{q}}\mathbf{v}$
 the equation in Proposition~\ref{quatgcd}, we get
 \[
  2\overline{\mathbf{q}}\mathbf{v}
  =
  \mathbf{q}_1\mathbf{q}\overline{\mathbf{q}}\mathbf{v}
  +
  \mathbf{q}_2\mathbf{q}\mathbf{u}\overline{\mathbf{q}}\mathbf{v}
  =
  Q
  \Big(
  \mathbf{q}_1\mathbf{v}
  +
  \mathbf{q}_2\frac{\mathbf{q}\mathbf{u}\overline{\mathbf{q}}\mathbf{v}}{Q}
  \Big)
 \]
 and, as $Q$ is odd, $Q$ must divide the coefficients of
 $\overline{\mathbf{q}}\mathbf{v}$.
\end{proof}

\section{Other dimensions}

Once the problem is solved for $d=4$ and we have proved that in general is impossible to complete an orthogonal basis for $2\nmid d$ (Proposition~\ref{odddminus1p}), the natural question is what happens in the rest of the dimensions. The conjecture in \cite{LaGa} is that the extension is always possible when the dimension is a multiple of four (see \cite[Conj.2]{LaGa}).
With our notation, this is the claim $\mathcal{C}_{d}(n_1,n_2)=\Z^+$ for $1\le n_1<n_2\le d$ when $4\mid d$.

In the even case not covered by the conjecture, $4\mid d-2$, the extension is not possible in general, as shown in the next result, which is in \cite[\S3]{LaGa}. Our proof is essentially the same avoiding the matrix notation.  

\begin{proposition}
 For $d=4k+2$, $k\in\Z^+$,  the numbers not representable as a sum of two squares are not in  $\mathcal{C}_d(d-2,d-1)$. In particular, they are not in 
 $\mathcal{C}_d(d-2,d)$.
\end{proposition}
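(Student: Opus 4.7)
My plan is to construct, for every $d = 4k+2$ and every $N \in \Z^+$ that is not a sum of two squares, an explicit set $\mathcal{S} \in \mathcal{O}_d(N, d-2)$ that cannot be extended by a vector in $\Z^d$ of norm $\sqrt{N}$. The guiding idea is to arrange $\mathcal{S}$ to be supported in the first $d-2$ coordinates, so that any candidate extension is forced to live in a coordinate $2$-plane whose integer points of norm $\sqrt{N}$ are precisely the representations of $N$ as a sum of two squares. That way the obstruction in the statement becomes visible at a single glance.

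The construction exploits the factorisation $d-2 = 4k$. I would partition the first $d-2$ coordinates of $\Z^d$ into $k$ disjoint blocks of four. By Lagrange's four-square theorem, write $N = x_1^2 + x_2^2 + x_3^2 + x_4^2$, and set $\mathbf{v} = x_1 + x_2\mathbf{i} + x_3\mathbf{j} + x_4\mathbf{k}$. The first part of the proof of Theorem~\ref{case4} (the case $n_1 = 1$) shows that the coefficient vectors of $\mathbf{v}, \mathbf{i}\mathbf{v}, \mathbf{j}\mathbf{v}, \mathbf{k}\mathbf{v}$ form a pairwise orthogonal quadruple in $\Z^4$ of common norm $\sqrt{N}$. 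Placing a copy of this quadruple into each of the $k$ blocks, padded with zeros elsewhere, yields $4k = d-2$ vectors in $\Z^d$ that are pairwise orthogonal — within a block by the quaternion construction, and across blocks by the disjointness of their supports — all of norm $\sqrt{N}$. Hence they give a valid $\mathcal{S} \in \mathcal{O}_d(N, d-2)$.

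Since this $\mathcal{S}$ spans the coordinate subspace $V = \{\vec{x} \in \R^d : x_{d-1} = x_d = 0\}$, we have $V^\perp = \R\vec{e}_{d-1} \oplus \R\vec{e}_d$ and so $V^\perp \cap \Z^d = \Z\vec{e}_{d-1} \oplus \Z\vec{e}_d$. Any extension vector $\vec{w} \in \Z^d$ orthogonal to $\mathcal{S}$ must therefore take the form $\vec{w} = (0,\dots,0,a,b)$ with $a^2 + b^2 = \|\vec{w}\|^2 = N$. The hypothesis on $N$ rules this out, so $\mathcal{S}$ does not extend, and $N \notin \mathcal{C}_d(d-2,d-1)$. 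The ``in particular'' clause follows at once from the trivial inclusion $\mathcal{C}_d(d-2,d) \subseteq \mathcal{C}_d(d-2,d-1)$.

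I do not foresee a genuine obstacle: the whole argument is a reduction to the classical two-squares obstruction, isolated by the coordinate-plane structure of $V^\perp$, and the quaternion block construction is exactly matched to the dimensional identity $d - 2 = 4k$. The only point that requires a moment of attention is checking that the embedded quadruples indeed remain pairwise orthogonal across blocks, which is immediate from their disjoint supports.
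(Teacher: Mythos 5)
Your proposal is correct and follows essentially the same route as the paper: build $k$ disjoint $4$-blocks each carrying an orthogonal quadruple of norm $\sqrt{N}$ (the paper cites its Theorem~\ref{case4}, whose $n_1=1$ case is exactly the Lagrange-plus-quaternion construction you spell out), observe that the resulting $d-2$ vectors span the first $d-2$ coordinates, and conclude that any extension must realise $N$ as a sum of two squares. The only difference is expository — you unpack the source of the quadruple explicitly rather than quoting the $d=4$ theorem.
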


\begin{proof}
 Let $N\in \mathcal{C}_d(d-2,d-1)$. By Theorem~\ref{case4}, we have 
 $\{\vec{v}_1,\vec{v}_2,\vec{v}_3,\vec{v}_4\}\in\mathcal{O}_d(N,4)$
 having all the coordinates zero except at most those in the  first four places. In general, we can construct 
 $\{\vec{v}_{4j-3},\vec{v}_{4j-2},\vec{v}_{4j-1},\vec{v}_{4j}\}\in\mathcal{O}_d(N,4)$,
 $1\le j\le k$, 
 supported on the coordinates $4j-3$, $4j-2$, $4j-1$ and $4j$. 
 Clearly $\mathcal S=\{\vec{v}_1,\vec{v}_2,\dots,\vec{v}_{4k}\}\in\mathcal{O}_d(N,d-2)$
 and they generate a subspace $V\subset\R^d$ such that $V^\perp$ is the $2$-dimensional subspace formed by the vectors with the first $d-2$ coordinates zero. Then any $\vec{v}$ such that
 $\mathcal{S}\cup\{\vec{v}\}\in\mathcal{O}_d(N,d)$
 must have at most two non zero coordinates and $N=\|\vec{v}\|^2$ is representable as a sum of two squares. 
\end{proof}

\medskip

The case $n_1=1$ of Theorem~\ref{case4} was settled using the left multiplication by the pure quaternion units 
$\mathbf{i}$,
$\mathbf{j}$ and
$\mathbf{k}$.
In dimension~8, something similar can be done with the pure octonion units. For basic information about the Cayley numbers (octonions), we refer the reader to \cite{KaSo}, \cite{CoSm} and \cite{baez}. 
Here we only use that the elements of this non associative normed division algebra can be identified with vectors $\vec{x}=(x_0,\dots,x_7)\in \R^8$ via $\sum_{j=0}^7 x_j\mathbf{u}_j$ where $\{\mathbf{u}_j\}$ are units generating the algebra with $\mathbf{u}_0=1$ and $\mathbf{u}_j^2=-1$. 
The right multiplication by 
$\mathbf{u}_j$ corresponds to the $j+1$ row vector of the following antisymmetric matrix:
\[
 C(\vec{x})= 
	\begin{pmatrix}
	x_0& -x_1& -x_2& -x_3& -x_4& -x_5& -x_6& -x_7 \\ 
	x_1&	x_0   & -x_4 & -x_7 &  x_2 & -x_6 &  x_5 &  x_3 \\
	x_2&	x_4 &  x_0   & -x_5 & -x_1 &  x_3 & -x_7 &  x_6 \\
	x_3&	x_7 &  x_5 & x_0    & -x_6 & -x_2 &  x_4 & -x_1 \\
	x_4&	-x_2 &  x_1 &  x_6 &  x_0   & -x_7 & -x_3 &  x_5 \\
	x_5&	x_6 & -x_3 &  x_2 &  x_7 &  x_0   & -x_1 & -x_4 \\
	x_6&	-x_5 &  x_7 & -x_4 &  x_3 &  x_1 & x_0    & -x_2 \\
	x_7&	-x_3 & -x_6 &  x_1 & -x_5 &  x_4 &  x_2 & x_0
	\end{pmatrix}.
\]
To be more concrete, the product $C(\vec{x})\vec{y}$, with $\vec{y}\in\R^8$ a column vector, gives the coordinates of the Cayley number product 
$\big(\sum_{j=0}^7 x_j\mathbf{u}_j\big)\big(\sum_{j=0}^7 y_j\mathbf{u}_j\big)$.

The rows of $C(\vec{x})$ are orthogonal (cf. \cite[\S8.4]{CoSm}) and we deduce at once:

\begin{proposition}
 We have $\mathcal{C}_8(1,n_2)=\Z^+$ for $1<n_2\le 8$. 
\end{proposition}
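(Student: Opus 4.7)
The plan is to exhibit explicitly an orthogonal basis of $\R^8$ in integer vectors of common norm $\|\vec{v}\|$ that contains a given $\vec{v}\in\Z^8\setminus\{\vec{0}\}$, in complete analogy with the case $n_1=1$ of Theorem~\ref{case4}. Given $\vec{v}=(v_0,\dots,v_7)$, I would form the matrix $C(\vec{v})$ displayed just before the statement. Reading off its first column shows directly that this column equals $\vec{v}$; conceptually, it records the Cayley product $v\cdot\mathbf{u}_0=v$, which is the octonion analogue of the identity that makes the column construction work.

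Next I invoke the orthogonality of the rows of $C(\vec{v})$, stated in the excerpt with reference to \cite[\S8.4]{CoSm}. Each row of $C(\vec{v})$ is a signed rearrangement of the entries of $\vec{v}$, so it has norm exactly $\|\vec{v}\|$; therefore $\|\vec{v}\|^{-1}C(\vec{v})$ is an orthogonal matrix, and consequently its columns are pairwise orthogonal and each of norm $\|\vec{v}\|$. Since every entry of $C(\vec{v})$ is of the form $\pm v_k$, each column lies in $\Z^8$.

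Combining these observations, the eight columns of $C(\vec{v})$ form an orthogonal basis of $\R^8$ consisting of integer vectors of common norm $\|\vec{v}\|$, with $\vec{v}$ itself as the first basis vector. Hence for any $1<n_2\le 8$ and any $\{\vec{v}\}\in\mathcal{O}_8(N,1)$, taking $\vec{v}$ together with any $n_2-1$ of the remaining columns produces an $\mathcal{S}'\in\mathcal{O}_8(N,n_2)$ containing $\mathcal{S}$, which establishes $\mathcal{C}_8(1,n_2)=\Z^+$.

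There is no serious obstacle here, as the matrix $C$ has been arranged precisely so that left multiplication by a Cayley number is a scaled orthogonal transformation and the choice of $\mathbf{u}_0=1$ guarantees that $\vec{v}$ itself is among the output vectors. The only minor point worth underlining is that one must read off the \emph{columns} of $C(\vec{v})$ rather than its rows, since the first row is the conjugate $(v_0,-v_1,\dots,-v_7)$ and not $\vec{v}$.
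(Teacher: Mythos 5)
Your proof is correct and takes essentially the same approach as the paper, which deduces the proposition ``at once'' from the orthogonality of the rows of $C(\vec{x})$ together with the fact that each row (hence column) is a signed permutation of the entries of $\vec{x}$. Your explicit passage to the \emph{columns}, so that $\vec{v}$ itself rather than its conjugate $(v_0,-v_1,\dots,-v_7)$ appears in the resulting orthogonal set, is precisely the small detail the paper leaves implicit, and you handle it correctly.
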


Dividing into $8$-blocks and using $C(\vec{x})$, or into $4$-blocks and using the case $n_1=1$ of Theorem~\ref{case4}, there is an immediate conclusion for higher dimensions. 

\begin{corollary}\label{triv84}
 We have $\mathcal{C}_d(1,n_2)=\Z^+$ for $1<n_2\le 8$ if $8\mid d$ and for $1<n_2\le 4$ if $4\mid d$. 
\end{corollary}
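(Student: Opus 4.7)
The plan is to deduce both statements from the $d=4$ and $d=8$ cases by a block decomposition argument.

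Assume first that $4\mid d$ and write $d=4m$. Given $\vec{v}\in\Z^d$ with $\|\vec{v}\|^2=N$, split its coordinates into $m$ consecutive blocks of length four and identify the $j$-th block with a quaternion $\mathbf{v}_j\in\mathcal{H}_{\Z}$. For each $\mathbf{u}\in\{\mathbf{i},\mathbf{j},\mathbf{k}\}$, define $\vec{v}^{(\mathbf{u})}\in\Z^d$ block-wise, letting its $j$-th block be the coefficient vector of the quaternion product $\mathbf{u}\mathbf{v}_j$. Multiplicativity of the quaternion norm gives $\|\vec{v}^{(\mathbf{u})}\|^2=\sum_{j=1}^{m}\|\mathbf{v}_j\|^2=N$, and the pairwise dot products among $\vec{v}$, $\vec{v}^{(\mathbf{i})}$, $\vec{v}^{(\mathbf{j})}$, $\vec{v}^{(\mathbf{k})}$ split as sums over the blocks of inner products already shown to vanish in the $n_1=1$ case of Theorem~\ref{case4}. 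This yields $\{\vec{v},\vec{v}^{(\mathbf{i})},\vec{v}^{(\mathbf{j})},\vec{v}^{(\mathbf{k})}\}\in\mathcal{O}_d(N,4)$, and discarding vectors from this set gives the desired element of $\mathcal{O}_d(N,n_2)$ for every $1<n_2\le 4$.

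For $8\mid d$, write $d=8m$ and interpret each block of length eight as an octonion. By the preceding proposition, each block $\vec{v}_j\in\Z^8$ can be extended to an orthogonal set $\{\vec{u}_{j,0},\vec{u}_{j,1},\dots,\vec{u}_{j,7}\}\subset\Z^8$ of vectors of norm $\|\vec{v}_j\|$ with $\vec{u}_{j,0}=\vec{v}_j$ (constructed from the rows of $C(\vec{v}_j)$). Applying the same recipe in every block, define $\vec{w}_k\in\Z^d$ by concatenating the $\vec{u}_{j,k}$ across $j=1,\dots,m$. Then $\|\vec{w}_k\|^2=\sum_j\|\vec{v}_j\|^2=N$, and each cross dot product $\vec{w}_k\cdot\vec{w}_l$ with $k\neq l$ splits block-wise into a sum of vanishing inner products. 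Taking subsets of $\{\vec{w}_0,\vec{w}_1,\dots,\vec{w}_7\}$ covers every $1<n_2\le 8$.

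I do not foresee a real obstacle: the bulk of the work was done in Theorem~\ref{case4} and the preceding proposition, and the present statement is a purely formal ``blocking up'' of those results. The only step requiring a little care is the verification that block-wise orthogonality identities combine additively to give orthogonality in $\R^d$, which is immediate from the decomposition of the dot product under direct sums; vanishing blocks pose no problem since every expression in the construction is homogeneous of degree one in $\mathbf{v}_j$.
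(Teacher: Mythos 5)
Your proof is correct and follows exactly the paper's (very terse) argument: the paper simply remarks that one divides the coordinates into $4$-blocks and applies the $n_1=1$ case of Theorem~\ref{case4}, or into $8$-blocks and applies the octonion matrix $C(\vec{x})$, block-wise. Your write-up just makes explicit the block-wise verification of norms and orthogonality that the paper leaves implicit, including the harmless case of vanishing blocks.
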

\medskip

Given $\vec{v}\in\R^7$, 
let $P(\vec{v})$ be
the submatrix $\big(c_{ij}(\vec{x})\big)_{i,j=2}^8$ with $\vec{x}=(0,\vec{v})$. It corresponds to the ``pure part'' (the part not including $\mathbf{u}_0$) of the product of pure octonions (those with $x_0=0$). By analogy with Lemma~\ref{qdotcross}, it defines a cross product in $\R^7$ with the usual properties \cite[\S7.4]{lounesto}. Namely, for (column) vectors $\vec{v},\vec{w}\in\R^7$ we define 
\[
 \vec{v}\times\vec{w}=P(\vec{v})\vec{w}
\]
and we have 
\begin{equation}\label{crprpr}
 (\vec{v}\times\vec{w})\cdot \vec{v}=0,
 \quad
 (\vec{v}\times\vec{w})\cdot \vec{w}=0
 \quad\text{and}\quad
 \|\vec{v}\times\vec{w}\|^2+(\vec{v}\cdot\vec{w})^2= \|\vec{v}\|^2\|\vec{w}\|^2.
\end{equation}
In fact, it can be proved that a binary cross product can only be defined for $d=3$ and $d=7$ and it has a topological significance \cite{massey}.

If we try to parallel our reasoning for 
$\mathcal{C}_3(2,3)$
to treat
$\mathcal{C}_7(2,3)$,
noting that Proposition~\ref{dminusw} for $d=3$ defines essentially the standard cross product in~$\R^3$, we find a serious obstruction because we lack the divisibility condition deriving from Corollary~\ref{detnorm}. 
We can state anyway a very weak analogue of a part of Theorem~\ref{case3sq}. In the following result, $k\mid \vec{v}$ means that every coordinate of $\vec{v}$ is a multiple of~$k$.

\begin{proposition}
 Let $K_1$, $K_2$ be positive integers and 
 $\{\vec{v},\vec{w}\}\in\mathcal{O}_7(N,2)$
 with $N=K_1^2K_2^2$.
 If $K_1\mid\vec{v}$,  $K_2\mid\vec{w}$
 then there exists a vector $\vec{u}\in\Z^7$
 such that 
 $\{\vec{v},\vec{w},\vec{u}\}\in\mathcal{O}_7(N,3)$.
\end{proposition}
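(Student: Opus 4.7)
The plan is to define $\vec{u}$ directly via the $7$-dimensional cross product once the divisibility hypothesis has been used to rescale. Write $\vec{v}=K_1\vec{v}'$ and $\vec{w}=K_2\vec{w}'$ with $\vec{v}',\vec{w}'\in\Z^7$. The candidate is
\[
 \vec{u}:=\vec{v}'\times\vec{w}'=P(\vec{v}')\vec{w}'.
\]
Since each entry of the matrix $P(\vec{v}')$ is (up to a sign) a coordinate of $\vec{v}'$, the product $P(\vec{v}')\vec{w}'$ has integer coordinates, so $\vec{u}\in\Z^7$. That settles the integrality step with no arithmetic input whatsoever.

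For orthogonality, the first two identities in (\ref{crprpr}) applied to $\vec{v}'$ and $\vec{w}'$ give $\vec{u}\cdot\vec{v}'=\vec{u}\cdot\vec{w}'=0$, hence $\vec{u}\cdot\vec{v}=\vec{u}\cdot\vec{w}=0$. For the norm, note that $\vec{v}\cdot\vec{w}=0$ implies $\vec{v}'\cdot\vec{w}'=0$, while
\[
 \|\vec{v}'\|^2=\frac{N}{K_1^2}=K_2^2
 \quad\text{and}\quad
 \|\vec{w}'\|^2=\frac{N}{K_2^2}=K_1^2.
\]
The third identity in (\ref{crprpr}) then gives $\|\vec{u}\|^2=\|\vec{v}'\|^2\|\vec{w}'\|^2=K_1^2K_2^2=N$, as required.

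In summary, the structural reason the proposition holds is that, while the $7$-dimensional cross product of two integral vectors of norm $\sqrt{N}$ has norm~$N$ (one factor too large), the divisibility hypothesis $K_1\mid\vec{v}$, $K_2\mid\vec{w}$ with $N=K_1^2K_2^2$ lets us divide out exactly the right integer factor $K_1K_2=\sqrt{N}$ to land in $\Z^7$ with the correct norm. I do not anticipate any real obstacle here: the proof is essentially an exercise in applying the cross product identities, once one notices that the bilinearity of $P$ allows pulling the scalars $K_1$ and $K_2$ out before restoring them. The mild point worth emphasizing in the write-up is that $P(\vec{v}')$ having entries drawn from $\{\pm v'_j, 0\}$ is what makes the integrality of $\vec{u}$ automatic without further arithmetic.
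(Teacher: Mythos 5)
Your proof is correct and is essentially identical to the paper's: both rescale to $\vec{v}/K_1$ and $\vec{w}/K_2$, take their $7$-dimensional cross product, and invoke the identities in \eqref{crprpr} together with the integrality of the entries of $P$. No further comment is needed.
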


\begin{proof}
 From our hypothesis 
 $\vec{v}_0=\vec{v}/K_1$
 and
 $\vec{w}_0=\vec{w}/K_2$
 are integral orthogonal vectors with norms $K_2$ and $K_1$, respectively. 
 Taking $\vec{u}=\vec{v}_0\times\vec{w}_0$, the properties \eqref{crprpr} assure that $\{\vec{v},\vec{w},\vec{u}\}$ is an orthogonal set and 
 $\|\vec{u}\|^2=\|\vec{v}\|^2=\|\vec{w}\|^2=N$. Clearly $P(\vec{v}_0)$ has integral entries, then $\vec{u}\in\Z^7$.
\end{proof}

For instance, the previous result with $K_1=8$ and $K_2=9$ when applied to the orthogonal vectors
\[
 \vec{v}=
	(  8,   8,  24,  64,   8,   8,  16) 
 \quad\text{and}\quad 
 \vec{w}=
	( -9,   9,   9, -18,  18,  63,  18),
\]
having $\|\vec{v}\|^2=\|\vec{w}\|^2=8^2\cdot 9^2$, gives
\[
 \vec{u}=
	( -1, -13,  53, -20, -30, -11,  28).
\]
Sometimes for two vectors $\vec{v}$ and $\vec{w}$ not fulfilling the divisibility conditions, by chance, we have that  $\vec{v}\times\vec{w}$ is divisible by $\|\vec{v}\|=\sqrt{N}$ and then we can take $\vec{u}=\|\vec{v}\|^{-1}\vec{v}\times\vec{w}$. There are many examples when $N$ is the square of a relatively small number. For instance, the vectors 
\[
 \vec{v}=
	( 1,   1,   8,  17,   1,   1,   2) 
 \quad\text{and}\quad 
 \vec{w}=
	( 3,  -1,  -3,  -1,  -1,   4,  18)
\]
verify $\{\vec{v},\vec{w}\}\in\mathcal{O}_7(361,2)$ and 
\[
 \vec{u}=\frac{1}{19}\vec{v}\times\vec{w}=
	(9,   3,   3,  -1, -16,   1,  -2)
\]
allows to extend the set to $\{\vec{v},\vec{w},\vec{u}\}\in\mathcal{O}_7(361,3)$.
\smallskip

As a matter of fact, apart of the binary cross products in $\R^3$ and $\R^7$ and the cross product of $d-1$ vectors in $\R^d$ (reflected in  Proposition~\ref{dminusw}) there exists also a ternary cross product in $\R^8$. This exhausts all the possibilities for cross products with the usual properties \cite[\S7.5]{lounesto}. 
See \cite[Th.2.1]{zvengrowski} for the expression of this ternary cross product in terms of the Cayley numbers and its properties. With our notation it corresponds to the formula
\[
 \vec{x}\times\vec{y}\times\vec{z}
 =
 -C(\vec{x})C(\vec{y}^*)\vec{z}
 +(\vec{y}\cdot \vec{z})\vec{x}
 -(\vec{z}\cdot \vec{x})\vec{y}
 +(\vec{x}\cdot \vec{y})\vec{z}
\]
where we consider $\vec{z}\in\R^8$ as a column vector to perform the matrix multiplication. Here $\vec{y}^*$ means $(y_0,-y_1,-y_2,\dots, -y_7)$ and the dot indicates the standard inner product in $\R^8$. It is apparent that $\vec{x}\times\vec{y}\times\vec{z}$ works finely as a map $\Z^8\times \Z^8\times \Z^8\longrightarrow\Z^8$. A variant of the previous proposition is:

\begin{proposition}
 Given $K_1,K_2,K_3\in\Z^+$, if 
 $\mathcal{S}=\{\vec{v}_1,\vec{v}_2,\vec{v}_3\}\in\mathcal{O}_8(N,3)$
 with $N=K_1K_2K_3$
 and $K_j\mid\vec{v}_j$ for $1\le j\le 3$, 
 then $\mathcal{S}$ can be extended with another vector to a set in 
 $\mathcal{O}_8(N,4)$.
\end{proposition}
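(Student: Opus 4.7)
The plan is to mimic the proof of the preceding binary proposition, now substituting the ternary cross product in $\R^8$ for the binary one in $\R^7$. Set $\vec{w}_j=\vec{v}_j/K_j$ for $1\le j\le 3$; by the divisibility hypothesis each $\vec{w}_j$ lies in $\Z^8$, and scaling preserves orthogonality, so $\{\vec{w}_1,\vec{w}_2,\vec{w}_3\}$ is still an orthogonal triple, with $\|\vec{w}_j\|^2=N/K_j^2$. I would then take the candidate extension vector to be
\[
 \vec{u}=\vec{w}_1\times\vec{w}_2\times\vec{w}_3,
\]
using the ternary cross product formula displayed above.

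Three things must be checked. First, integrality: the formula for $\vec{x}\times\vec{y}\times\vec{z}$ only involves the matrices $C(\vec{w}_1)$ and $C(\vec{w}_2^*)$, both of which have integer entries when their arguments are integral, together with integer linear combinations of $\vec{w}_1,\vec{w}_2,\vec{w}_3$ weighted by the (integer) dot products $\vec{w}_i\cdot\vec{w}_j$; hence $\vec{u}\in\Z^8$. Second, orthogonality: the ternary cross product in $\R^8$ is known to be orthogonal to each of its three arguments (see \cite[Th.2.1]{zvengrowski}), so $\vec{u}\perp\vec{w}_j$, which implies $\vec{u}\perp\vec{v}_j$ since $\vec{v}_j$ is a positive scalar multiple of $\vec{w}_j$.

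Third and most delicate, the norm. The ternary cross product satisfies a Gram-type identity, the analogue of \eqref{crprpr}, which for \emph{pairwise orthogonal} arguments collapses to the multiplicative identity
\[
 \|\vec{x}\times\vec{y}\times\vec{z}\|^2
 =\|\vec{x}\|^2\|\vec{y}\|^2\|\vec{z}\|^2.
\]
Applying this to $\vec{w}_1,\vec{w}_2,\vec{w}_3$ gives
\[
 \|\vec{u}\|^2=\frac{N}{K_1^2}\cdot\frac{N}{K_2^2}\cdot\frac{N}{K_3^2}=\frac{N^3}{(K_1K_2K_3)^2}=\frac{N^3}{N^2}=N,
\]
exactly as required, so $\{\vec{v}_1,\vec{v}_2,\vec{v}_3,\vec{u}\}\in\mathcal{O}_8(N,4)$.

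The main obstacle I anticipate is not the algebraic verification but rather pinning down the correct multiplicative norm identity for the ternary cross product on pairwise orthogonal inputs with the sign conventions used in the displayed formula; once this is extracted from \cite{zvengrowski} (or checked directly by expanding $\vec{x}\times\vec{y}\times\vec{z}$ when the cross terms $\vec{x}\cdot\vec{y}$, $\vec{y}\cdot\vec{z}$, $\vec{z}\cdot\vec{x}$ vanish), the argument is a clean transcription of the binary case treated in the previous proposition.
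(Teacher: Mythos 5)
Your proposal is correct and follows essentially the same route as the paper: rescale each $\vec{v}_j$ by $K_j$ to obtain integral vectors, take their ternary cross product, and use the multiplicative norm identity for pairwise orthogonal arguments to get $\|\vec{u}\|^2=(N/K_1^2)(N/K_2^2)(N/K_3^2)=N$. The paper's proof is just a terser version of this, so nothing further is needed.
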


\begin{proof}
 Take $\vec{v}_{j0}=\vec{v}_j/K_j$. the vector $\vec{w}=\vec{v}_{10}\times\vec{v}_{20}\times\vec{v}_{30}$ is orthogonal to $\vec{v}_1$, $\vec{v}_2$ and $\vec{v}_3$. As these vectors are orthogonal, we have 
 $\|\vec{w}\|^2=\|\vec{v}_{10}\|^2\|\vec{v}_{20}\|^2\|\vec{v}_{30}\|^2$ that is 
 $N/K_1^2\cdot N/K_2^2\cdot N/K_3^2=N$.
\end{proof}

An example of the previous result with $K_1=12$, $K_2=15$, $K_3=20$, which corresponds to $N=3600$, is the orthogonal set $\{ \vec{v}_1, \vec{v}_2, \vec{v}_3\}$ with 
\[
 \begin{cases}
  \vec{v}_1=(12, -24, -12, 12, -24, 24, -36, 12 ),
  \\
  \vec{v}_2=(30, 15, -15, -15, -15, -30, 0, 30 ),
  \\
  \vec{v}_3=(40, 20, 20, 20, 20, 20, 0, 0).
 \end{cases}
\]
The vector $\vec{w}$ in the proof above is $( 2, 0, -33, -27, 26, 30, 9, 11)$. It has $\|\vec{w}\|^2=N$ and allows to extend the set with a new orthogonal vector. 
\medskip

There do not exist finite dimensional division algebras over $\R$ beyond the Cayley numbers. A more versatile extension of the quaternions $\mathcal{H}$ are the Clifford algebras, widely employed in theoretical physics \cite{woit}. In the following lines we explore how to traduce some instances of the orthogonality to the setting of some Clifford algebras in a constructive way, avoiding the reference to the general theory to minimize the prerequisites (see \cite{lounesto} for a basic approach to Clifford algebras, mainly through Euclidean examples, and \cite{garling} for a more advanced introduction).
\smallskip

Let $\mathbb{F}_2$ be the field of two elements and $V$ the subspace of codimension~1 of $\mathbb{F}_2^n$ defined by $x_1+\dots+x_n=0$.  For our purposes it will be convenient to define  the function
\[
 s:V\longmapsto \mathbb{F}_2,
 \qquad\text{where}\quad
 s(\vec{v})=\frac 12 \#\{1\le j\le n\,:\; v_j=1\}\pmod{2}.
\]
Note that it is well defined because each vector in $V$ has an even number of ones. 

The even subalgebra $\mathcal{E}_n$ of the Clifford algebra
$\mathcal{C}\ell_{0,n}(\R)$
can be defined as having a basis $\mathcal{B}=\{\mathbf{e}_{\vec{a}}\,:\, \vec{a}\in V\}$ over $\R$ 
obeying the algebra operation \cite[\S2.13]{lounesto}
\begin{equation}\label{clifford}
 \mathbf{e}_{\vec{a}}\mathbf{e}_{\vec{b}}
 =
 S(\vec{a},\vec{b})\mathbf{e}_{\vec{a}+\vec{b}}
 \qquad\text{with}\quad 
 S(\vec{a},\vec{b})
 =
 (-1)^{\sum_{j=1}^n\sum_{k=1}^ja_jb_k}.
\end{equation}
The element 
$\mathbf{e}_{\vec{0}}$
is the unit and $\dim_{\R}\mathcal{E}_n=2^{n-1}$. 

Consider the natural bijective map 
$\phi:\mathcal{E}_n\longmapsto\R^{2^{n-1}}$
assigning to each element of 
$\mathcal{E}_n$ its coordinates in the basis $\mathcal{B}$.
The following result relates the multiplication by the basis elements to the orthogonality.

\begin{proposition}\label{cliforth}
 Consider a set $V_0\subset V$ of cardinality $n_0$ such that 
 $s(\vec{u})+s(\vec{v})+\vec{u}\cdot\vec{v}$ 
 is odd for any distinct $\vec{u},\vec{v}\in V_0$.
 Then given $\mathbf{e}\in\mathcal{E}_n$ with integral coordinates and norm $\sqrt{N}$, we have  
 $\big\{\phi(\mathbf{e}\mathbf{e}_{\vec{v}})\,:\, \vec{v}\in V_0\big\}\in\mathcal{O}_{2^{n-1}}(N,n_0)$. 
\end{proposition}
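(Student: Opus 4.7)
The plan is to compute the relevant vectors coordinate-by-coordinate and reduce orthogonality to a single scalar identity through the bilinearity built into the cocycle $S$ of \eqref{clifford}. Writing $\mathbf{e}=\sum_{\vec{a}\in V} e_{\vec{a}}\mathbf{e}_{\vec{a}}$ and applying \eqref{clifford}, the coefficient of $\mathbf{e}_{\vec{b}}$ in $\mathbf{e}\mathbf{e}_{\vec{v}}$ is $S(\vec{b}+\vec{v},\vec{v})\,e_{\vec{b}+\vec{v}}$. Since $S=\pm 1$ this is an integer, and summing its squares over $\vec{b}\in V$ gives $\|\phi(\mathbf{e}\mathbf{e}_{\vec{v}})\|^2=N$ after the reindexing $\vec{b}\mapsto\vec{b}+\vec{v}$. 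Only pairwise orthogonality for distinct $\vec{u},\vec{v}\in V_0$ remains.

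Set $\vec{w}=\vec{u}+\vec{v}\neq\vec{0}$. Changing variables to $\vec{c}=\vec{b}+\vec{u}$, the scalar product of $\phi(\mathbf{e}\mathbf{e}_{\vec{u}})$ and $\phi(\mathbf{e}\mathbf{e}_{\vec{v}})$ becomes
\[
\sum_{\vec{c}\in V} S(\vec{c},\vec{u})\,S(\vec{c}+\vec{w},\vec{v})\,e_{\vec{c}}\,e_{\vec{c}+\vec{w}}.
\]
The map $\vec{c}\mapsto\vec{c}+\vec{w}$ is a fixed-point-free involution on $V$, and the factor $e_{\vec{c}}e_{\vec{c}+\vec{w}}$ is symmetric under it, so the whole sum vanishes provided that
$S(\vec{c},\vec{u})S(\vec{c}+\vec{w},\vec{v})+S(\vec{c}+\vec{w},\vec{u})S(\vec{c},\vec{v})=0$ for every $\vec{c}\in V$.

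The exponent in \eqref{clifford} is $\mathbb{F}_2$-bilinear, so $S(\vec{a}+\vec{b},\vec{y})=S(\vec{a},\vec{y})S(\vec{b},\vec{y})$ and analogously in the right slot. Multiplying the previous identity through by $S(\vec{c}+\vec{w},\vec{u})S(\vec{c},\vec{v})$ and applying these rules, every $\vec{c}$-dependence cancels and the condition collapses to $S(\vec{w},\vec{u})S(\vec{w},\vec{v})=-1$; one further use of bilinearity rewrites this as the single scalar requirement $S(\vec{w},\vec{w})=-1$.

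It remains to evaluate $S(\vec{w},\vec{w})$ via a parity count. For $\vec{w}\in V$ of (necessarily even) weight $2m$, the exponent of $S(\vec{w},\vec{w})$ counts pairs $(j,k)$ with $k\le j$ and $w_j=w_k=1$, giving $\binom{2m}{2}+2m=2m^2+m\equiv m\equiv s(\vec{w})\pmod{2}$. Thus $S(\vec{w},\vec{w})=(-1)^{s(\vec{w})}$, and since $|\vec{u}+\vec{v}|=|\vec{u}|+|\vec{v}|-2\,\vec{u}\cdot\vec{v}$ one obtains $s(\vec{w})\equiv s(\vec{u})+s(\vec{v})+\vec{u}\cdot\vec{v}\pmod{2}$. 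Therefore $S(\vec{w},\vec{w})=-1$ is precisely the parity hypothesis on $V_0$, which completes the proof. The step demanding most care is the bookkeeping of the $\mathbb{F}_2$-bilinearity of $S$ together with the correct treatment of the diagonal terms $j=k$ in the count for $S(\vec{w},\vec{w})$; both work out because $V$ consists of vectors of even weight, so the restriction to the even subalgebra $\mathcal{E}_n$ is essential.
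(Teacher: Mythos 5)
Your proof is correct and follows essentially the same route as the paper: you pair the coordinates indexed by $\vec{c}$ and $\vec{c}+\vec{u}+\vec{v}$, reduce orthogonality to a sign identity for the cocycle $S$, and finish with a parity count equivalent to Lemma~\ref{ssclif}. The only cosmetic difference is that you exploit the bilinearity of $S$ to collapse the condition to $S(\vec{w},\vec{w})=-1$ with $\vec{w}=\vec{u}+\vec{v}$ and then use the weight identity for $\vec{u}+\vec{v}$, whereas the paper expands the exponents directly and invokes $\bigl(\sum u_j\bigr)\bigl(\sum v_k\bigr)=0$; the underlying computation is the same.
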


In $\mathcal{E}_3$
we can identify 
$\mathbf{e}_{(0,0,0)}=1$, 
$\mathbf{e}_{(1,1,0)}=\mathbf{i}$, 
$\mathbf{e}_{(1,0,1)}=\mathbf{j}$, 
$\mathbf{e}_{(0,1,1)}=\mathbf{k}$
preserving the algebra operations \eqref{clifford}. In this way,
$\mathcal{E}_3$ becomes isomorphic to the algebra of quaternions $\mathcal{H}$. 
The orthogonality of the vectors defined by the coordinates of $\{\mathbf{q}, \mathbf{q}\mathbf{i}, \mathbf{q}\mathbf{j}, \mathbf{q}\mathbf{k}\}$, what was used in the proof of Theorem~\ref{case4} (the ordering is not important by conjugation), is then covered by Proposition~\ref{cliforth} choosing $V_0=V$. 

It is unclear if it is possible to recover in this context the matrix $C(\vec{x})$ associated to the multiplication by octonion units. Probably the underlying difficulty is that Clifford algebras are associative and the algebra of Cayley numbers is not. 

\begin{lemma}\label{ssclif}
 We have 
 $s(\vec{v})=\sum_{j=1}^n\sum_{k=1}^jv_jv_k$ 
 for every $\vec{v}\in V$. 
\end{lemma}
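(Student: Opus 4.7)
\textit{Proof proposal.} The plan is to expand the double sum on the right-hand side and separate the diagonal ($k=j$) from the strictly lower-triangular ($k<j$) contributions, exploiting that in $\mathbb{F}_2$ we have $v_j^2=v_j$. Concretely, I would write
\[
 \sum_{j=1}^n\sum_{k=1}^j v_jv_k
 =\sum_{j=1}^n v_j^2+\sum_{1\le k<j\le n} v_jv_k
 =\sum_{j=1}^n v_j+\sum_{1\le k<j\le n} v_jv_k
\]
in $\mathbb{F}_2$. The second sum counts (mod~$2$) the number of unordered pairs of indices $\{j,k\}$ with $v_j=v_k=1$, i.e.\ $\binom{w}{2}$, where $w=\#\{j:v_j=1\}$ is the Hamming weight of $\vec{v}$. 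The first sum is simply $w\pmod{2}$.

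Now I would invoke the defining hypothesis $\vec{v}\in V$, which says precisely that $w$ is even. Writing $w=2m$, the right-hand side above becomes
\[
 w+\binom{w}{2}
 \equiv 2m+\frac{2m(2m-1)}{2}
 \equiv m(2m-1)
 \equiv m \pmod{2}.
\]
On the other hand, by its very definition $s(\vec{v})=w/2\pmod{2}=m\pmod{2}$, so both sides agree and the lemma follows.

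The computation is entirely routine; there is no genuine obstacle, only the mild bookkeeping of making sure the diagonal terms $v_j^2$ are treated correctly (which is where $v_j^2=v_j$ in $\mathbb{F}_2$ enters) and that the parity of $\binom{w}{2}$ is computed using $2\mid w$, a step which would fail outside $V$ and which is exactly what forces the lemma to be stated on $V$.
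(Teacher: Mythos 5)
Your proof is correct and is essentially the paper's argument in different clothing: the paper expands $(v_1+\cdots+v_n)^2\equiv 0\pmod 4$ over the integers and divides by $2$, which is exactly the identity $w^2=w+2\binom{w}{2}$ (with $w$ the Hamming weight) that you obtain by splitting the double sum into its diagonal and strictly triangular parts. Both versions hinge on the same two facts, $v_j^2=v_j$ and $2\mid w$, so there is nothing to add.
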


\begin{proof}
 Let us consider the coordinates of $\vec{v}$ as integers in $\{0,1\}$. 
 We know $(v_1+\dots +v_n)^2\equiv 0\pmod{4}$ because $\vec{v}\in V$.
 Expanding the square and using $v_j^2= v_j$, we obtain 
 \[
  \sum_{j=1}^n v_j 
  +
  2
  \sum_{j=1}^n\sum_{k=1}^{j-1}v_jv_k
  =
  -\sum_{j=1}^n v_j 
  +
  2
  \sum_{j=1}^n\sum_{k=1}^jv_jv_k
  \equiv 0\pmod{4}.
 \]
 Dividing by $2$, we get the result.
\end{proof}

\begin{proof}[Proof of Proposition~\ref{cliforth}]
 Let 
 $\mathbf{e}=\sum_{\vec{a}\in V}\lambda_{\vec{a}}\mathbf{e}_{\vec{a}}$
 with $\lambda_{\vec{a}}\in\Z$.
 Using \eqref{clifford}, the $\mathbf{e}_{\vec{c}}$ coordinates of 
 $\mathbf{e}\mathbf{e}_{\vec{v}}$
 and
 $\mathbf{e}\mathbf{e}_{\vec{u}}$
 are
 $
  S(\vec{c}-\vec{v},\vec{v})\lambda_{ \vec{c}-\vec{v}}
 $ and  
 $
  S(\vec{c}-\vec{u},\vec{u})\lambda_{ \vec{c}-\vec{u}}
 $, respectively.
 In the same way, noting $+1=-1$ in $\mathbb{F}_2$, the $\mathbf{e}_{\vec{c}+\vec{v}-\vec{u}}$ coordinates are
 $
  S(\vec{c}-\vec{u},\vec{v})\lambda_{ \vec{c}-\vec{u}}
 $
 and
 $
  S(\vec{c}-\vec{v},\vec{u})\lambda_{ \vec{c}-\vec{v}}
 $.
 Then the orthogonality of 
 $\phi(\mathbf{e}\mathbf{e}_{\vec{v}})$ 
 and
 $\phi(\mathbf{e}\mathbf{e}_{\vec{u}})$ 
 follows if 
 \[
  S(\vec{c}-\vec{v},\vec{v})
  S(\vec{c}-\vec{u},\vec{u})
  =
  -
  S(\vec{c}-\vec{u},\vec{v})
  S(\vec{c}-\vec{v},\vec{u})
 \]
 because in this case the contribution to the scalar product of the coordinates indexed with $\vec{c}$ and $\vec{c}+\vec{v}-\vec{u}$ cancel. 
 Recalling the definition of $S$ in \eqref{clifford} and noting that the exponent is a bilinear form, the previous equation translates into 
 \[
  -\sum_{j=1}^n\sum_{k=1}^j
  \big(
  v_jv_k
  +u_ju_k
  \big)
  =
  -\sum_{j=1}^n\sum_{k=1}^j
  \big(
  u_jv_k
  +v_ju_k
  \big)
  +1
  \qquad\text{in }\mathbb{F}_2.
 \]
 For $\vec{u},\vec{v}\in V$ we have
 \[
  0
  =
  \Big(
  \sum_{j=1}^n
  u_j 
  \Big)
  \Big(
  \sum_{k=1}^n
  v_k 
  \Big)
  =
  \sum_{j=1}^n\sum_{k=1}^j
  u_jv_k
  +
  \sum_{j=1}^n\sum_{k=1}^j
  u_kv_j
  -
  \vec{u}\cdot\vec{v}.
 \]
 Then the previous relation reads 
 \[
  -\sum_{j=1}^n\sum_{k=1}^j
  \big(
  v_jv_k
  +u_ju_k
  \big)
  +  
  \vec{u}\cdot\vec{v}
  =1
 \]
 and the result follows from Lemma~\ref{ssclif}. 
\end{proof}

For instance, for $n=5$, which corresponds to $d=2^{n-1}=16$, a valid set in Proposition~\ref{cliforth} is
\[
 V_0= \big\{\vec{0}, (0,0,1,0,1), (0,0,1,1,0), (0,1,1,0,0), (1,0,1,0,0)\big\}.
\]
Working out the coordinates of $\phi(\mathbf{e}\mathbf{e}_{\vec{v}})$ with $\mathbf{e}$ an arbitrary element with $\phi(\mathbf{e})=(x_0,\dots, x_{15})$ we obtain the rows of a matrix $A=(A_1|A_2)$ with 
\[
A_1=
 \begin{pmatrix}
x_{0}  & x_{1}  & x_{2}  & x_{3}  & x_{4}  & x_{5}  & x_{6}  & x_{7}  \\
-x_{2} & -x_{3}  & x_{0}  & x_{1}  & x_{6}  & x_{7} & -x_{4} & -x_{5}  \\
-x_{3}  & x_{2} & -x_{1}  & x_{0} & -x_{7}  & x_{6} & -x_{5}  & x_{4} \\
-x_{6} & -x_{7}  & x_{4}  & x_{5} & -x_{2} & -x_{3}  & x_{0}  & x_{1} \\
-x_{10} & -x_{11}  & x_{8}  & x_{9}  & x_{14}  & x_{15} & -x_{12} & -x_{13}
\end{pmatrix}
\]
and
\[
A_2=
 \begin{pmatrix}
 x_{8}  & x_{9}  & x_{10}  & x_{11}  & x_{12}  & x_{13}  & x_{14}  & x_{15}\\
 x_{10}  & x_{11} & -x_{8} & -x_{9} & -x_{14} & -x_{15}  & x_{12}  & x_{13}\\
-x_{11}  & x_{10} & -x_{9}  & x_{8} & -x_{15}  & x_{14} & -x_{13}  & x_{12}\\
-x_{14} & -x_{15}  & x_{12}  & x_{13} & -x_{10} & -x_{11}  & x_{8}  & x_{9}\\
-x_{2} & -x_{3}  & x_{0}  & x_{1}  & x_{6}  & x_{7} & -x_{4} & -x_{5}
\end{pmatrix}.
\]
According to Proposition~\ref{cliforth}, the  rows of $A$ are orthogonal of the same norm and hence $\mathcal{C}_{16}(1,5)=\Z^+$. This a little discouraging because an exhaustive search shows that for $d=16$ the cardinality of $V_0$ is at most~5 and Corollary~\ref{triv84} gives a better result in this case.

%\bibliographystyle{plain}
%\bibliography{bibsquant}

\begin{thebibliography}{10}

\bibitem{baez}
J.~C. Baez.
\newblock The octonions.
\newblock {\em Bull. Amer. Math. Soc. (N.S.)}, 39(2):145--205, 2002.

\bibitem{BaGr}
P.~T. Bateman and E.~Grosswald.
\newblock Positive integers expressible as a sum of three squares in
  essentially only one way.
\newblock {\em J. Number Theory}, 19(3):301--308, 1984.

\bibitem{CoPe}
B.~Coan and C.-t. Perng.
\newblock Factorization of {H}urwitz quaternions.
\newblock {\em Int. Math. Forum}, 7(41-44):2143--2156, 2012.

\bibitem{CoSm}
J.~H. Conway and D.~A. Smith.
\newblock {\em On quaternions and octonions: their geometry, arithmetic, and
  symmetry}.
\newblock A K Peters, Ltd., Natick, MA, 2003.

\bibitem{cremona}
J.~Cremona.
\newblock Letter to the editor.
\newblock {\em Amer. Math. Monthly}, 94:757--758, 1987.

\bibitem{DiZh}
J.~Ding and A.~Zhou.
\newblock Eigenvalues of rank-one updated matrices with some applications.
\newblock {\em Appl. Math. Lett.}, 20(12):1223--1226, 2007.

\bibitem{garling}
D.~J.~H. Garling.
\newblock {\em Clifford algebras: an introduction}, volume~78 of {\em London
  Mathematical Society Student Texts}.
\newblock Cambridge University Press, Cambridge, 2011.

\bibitem{GaLa}
L.~N. Gatti and J.~Lacalle.
\newblock A model of discrete quantum computation.
\newblock {\em Quantum Inf. Process.}, 17(8):Paper No. 192, 18, 2018.

\bibitem{grosswald}
E.~Grosswald.
\newblock {\em Representations of integers as sums of squares}.
\newblock Springer-Verlag, New York, 1985.

\bibitem{KaSo}
I.~L. Kantor and A.~S. Solodovnikov.
\newblock {\em Hypercomplex numbers}.
\newblock Springer-Verlag, New York, 1989.
\newblock An elementary introduction to algebras, Translated from the Russian
  by A. Shenitzer.

\bibitem{LaGa}
J.~Lacalle and L.~N. Gatti.
\newblock Discrete quantum computation and {L}agrange's four-square theorem.
\newblock {\em Quantum Inf. Process.}, 19(1):Paper No. 34, 20, 2020.

\bibitem{lounesto}
P.~Lounesto.
\newblock {\em Clifford algebras and spinors}, volume 239 of {\em London
  Mathematical Society Lecture Note Series}.
\newblock Cambridge University Press, Cambridge, 1997.

\bibitem{massey}
W.~S. Massey.
\newblock Cross products of vectors in higher-dimensional {E}uclidean spaces.
\newblock {\em Amer. Math. Monthly}, 90(10):697--701, 1983.

\bibitem{spira}
R.~Spira.
\newblock The {D}iophantine equation {$x^{2}+y^{2}+z^{2}=m^{2}$}.
\newblock {\em Amer. Math. Monthly}, 69:360--364, 1962.

\bibitem{stillwell}
J.~Stillwell.
\newblock {\em Naive {L}ie theory}.
\newblock Undergraduate Texts in Mathematics. Springer, New York, 2008.

\bibitem{woit}
P.~Woit.
\newblock {\em Quantum theory, groups and representations}.
\newblock Springer, Cham, 2017.
\newblock An introduction.

\bibitem{zvengrowski}
P.~Zvengrowski.
\newblock A {$3$}-fold vector product in {$R^{8}$}.
\newblock {\em Comment. Math. Helv.}, 40:149--152, 1966.

\end{thebibliography}

\end{document}